 \newtheorem{thm}{Theorem}[section]
 \newtheorem{lem}[thm]{Lemma}
 \theoremstyle{definition}
 \theoremstyle{remark}
 \newtheorem{rem}[thm]{Remark}
 \numberwithin{equation}{section}
\begin{document}

%-------------------------------------------------------------------------
% editorial commands: to be inserted by the editorial office
%
%\firstpage{1} \volume{228} \Copyrightyear{2004} \DOI{003-0001}
%
%
%\seriesextra{Just an add-on}
%\seriesextraline{This is the Concrete Title of this Book\br H.E. R and S.T.C. W, Eds.}
%
% for journals:
%
%\firstpage{1}
%\issuenumber{1}
%\Volumeandyear{1 (2004)}
%\Copyrightyear{2004}
%\DOI{003-xxxx-y}
%\Signet
%\commby{inhouse}
%\submitted{March 14, 2003}
%\received{March 16, 2000}
%\revised{June 1, 2000}
%\accepted{July 22, 2000}
%
%
%
%---------------------------------------------------------------------------
%Insert here the title, affiliations and abstract:
%

\renewcommand{\div}{{\rm div}\,}
\newcommand{\R}{{\mathbb R}}
\newcommand{\ind}{{\mathbf 1}}
\newcommand{\tu}{\tilde u}
\newcommand{\tuu}{\tilde U}
\newcommand{\tb}{\tilde b}
\newcommand{\bu}{\overline u}
\newcommand{\buu}{\overline U}
\newcommand{\bb}{\overline b}

\title[Optimality of integrability for advection-diffusion]
{Optimality of integrability estimates \\ for advection-diffusion equations}

%----------Authors
\author[S.~Bianchini]{Stefano Bianchini}
\address{%
SISSA,
via Bonomea 265,
I-34136 Trieste (Italy)}
\email{bianchin@sissa.it}

\author[M.~Colombo]{Maria Colombo}
\address{%
Institute for Theoretical Studies, ETH Z\"urich,
Clausiusstrasse 47,
CH-8092 Z\"urich (Switzerland)\br
and\br
Institut f\"ur Mathematik, Universit\"at Z\"urich,
Winterthurerstrasse 190,
CH-8057 Z\"urich (Switzerland)}
\email{maria.colombo@eth-its.ethz.ch}

\author[G.~Crippa]{Gianluca Crippa}
\address{%
Departement Mathematik und Informatik, Universit\"at Basel,
Spiegelgasse 1,
CH-4051 Basel (Switzerland)}
\email{gianluca.crippa@unibas.ch}

\author[L.~V.~Spinolo]{Laura V.~Spinolo}
\address{%
IMATI-CNR, 
via Ferrata 1, 
I-27100 Pavia (Italy)}
\email{spinolo@imati.cnr.it}

%----------classification, keywords, date
\subjclass{Primary 35K10, 35B45;\break Secondary 35R05, 58J35}

\keywords{Advection-diffusion equations; parabolic equations; integrability estimates and their optimality; Duhamel formula; self-similar solutions.}

\date{}
%----------additions
\dedicatory{Dedicated to Alberto Bressan on the occasion of his 60\,$^{th}$ birthday.}
%%% ----------------------------------------------------------------------

\begin{abstract}
We discuss $L^p$ integrability estimates for the solution~$u$ of the advection-diffusion equation~$\partial_t u + \div(bu) = \Delta u$, where the velocity field~$b \in L^r_t L^q_x$. We first summarize some classical results proving such estimates for certain ranges of the exponents $r$ and $q$. Afterwards we prove the optimality of such ranges by means of new original examples. 
\end{abstract}

%%% ----------------------------------------------------------------------
\maketitle
%%% ----------------------------------------------------------------------
%\tableofcontents

\section{Introduction}\label{s:intro}
We consider the advection-diffusion equation
\begin{equation}\label{e:PDE}
\partial_t u + \div(bu) = \Delta u \,,
\end{equation}
where the velocity field $b=b(t,x) : [0,1] \times \R^d \to \R^d$ and the unknown $u = u(t,x) : [0,1] \times \R^d \to \R$. We denote by $u_0 = u_0(x) : \R^d \to \R$ the initial datum for $u$. 

\medskip

When $b \in L^\infty([0,1]\times\R^d)$ and $u_0 \in L^2(\R^d)$ equation~\eqref{e:PDE} possesses a unique solution $u$ in the parabolic class
\begin{equation}\label{e:paraclass}
u \in L^2([0,1];H^1(\R^d)) \,, \quad \text{ with } \quad u' \in L^2([0,1];H^{-1}(\R^d)) \,,
\end{equation}
where we denote by $u'$ the (distributional) derivative of $u$ with respect to time (see for instance~\cite{evans}). To this regularity class for the solution corresponds the following elementary energy estimate. Multiplying~\eqref{e:PDE} times $u$, integrating over $\R^d$, and integrating by parts we obtain
$$
\frac{1}{2} \frac{d}{dt} \int_{\R^d} u^2 \, dx
=
\int_{\R^d} u b \cdot \nabla u \, dx - \int_{\R^d} | \nabla u|^2 \, dx \,.
$$
Since $b$ is bounded we can use Young's inequality to obtain
$$
\frac{1}{2} \frac{d}{dt} \int_{\R^d} u^2 \, dx
\leq 
\| b \|_\infty \left[ \varepsilon \int_{\R^d} |\nabla u|^2 \, dx + C_\varepsilon \int_{\R^d} u^2 \, dx \right] - \int_{\R^d} | \nabla u|^2 \, dx \,,
$$
and by choosing $\varepsilon>0$ sufficiently small we find
$$
\frac{1}{2} \frac{d}{dt} \int_{\R^d} u^2 \, dx
+
\frac{1}{2} \int_{\R^d} | \nabla u|^2 \, dx
\leq 
C \int_{\R^d} u^2 \, dx \,.
$$
In particular, 
\begin{equation}\label{e:gale}
\| u(t,\cdot) \|_{L^2(\R^d)} \leq C \| u_0 \|_{L^2(\R^d)} \,,
\qquad \text{ with $C = C(\| b \|_\infty)$,}
\end{equation}
uniformly for $0 \leq t \leq 1$. In fact, the estimate~\eqref{e:gale} implies at once uniqueness for the solution of~\eqref{e:PDE} in the parabolic class~\eqref{e:paraclass} and, together with the Galerkin approximation method, existence in the same class (see again~\cite{evans} for details). Moreover, if $b$ is smooth, the unique solution $u$ is smooth as well. 

\medskip

On the other hand, a priori estimates in $L^1(\R^d)$ are available without integrability assumptions on $b$. Indeed, in a regular framework, integrating over $\R^d$ the inequality
$$
\partial_t |u| + \div(b|u|) \leq \Delta |u| 
$$
we discover that $\| u(t,\cdot) \|_{L^1(\R^d)} \leq \| u_0 \|_{L^1(\R^d)}$.

\medskip

In this note we discuss a priori integrability estimates for the solution~$u$ of~\eqref{e:PDE} when the velocity field $b$ has some integrability in space and time, but in general is unbounded. For $1 \leq r,q \leq \infty$ we consider a velocity field
$$
b \in L^r([0,1];L^q(\R^d)) \,.
$$
The theory developed in~\cite{lady2} provides estimates for $u$ in $L^2(\R^d)$ uniformly in time provided
\begin{equation}\label{e:range}
\frac{2}{r} + \frac{d}{q} \leq 1\,, \quad
\text{ with } \quad
\left\{ \begin{array}{ll}
\text{$r \in [2,\infty)$ and $q \in (d,\infty]$} & \text{ if $d \geq 2$}\\ \\ 
\text{$r \in [2,4]$ and $q \in [2,\infty]$} & \text{ if $d =1$.}
\end{array}\right.
\end{equation}
When $d\geq 2$ the same holds for $r=\infty$ if we assume $q>d$. We present a proof of such $L^2$ estimate in~\S\ref{s:estimate}, essentially following~\cite{lady2}.

In \S\ref{s:duhamel} we present a different proof of such integrability estimates using Duhamel representation formula, which requires a slightly different range for the exponents than~\eqref{e:range}, see in particular~\eqref{e:coeffi}. 

We remark in passing that~\eqref{e:range} defines the critical integrability for the velocity field $b$ in~\eqref{e:PDE}. In fact, equation~\eqref{e:PDE} is invariant under the scaling
$$
u_\lambda(t,x) = u \big( \lambda t, \sqrt{\lambda}x \big) \,, \qquad
b_\lambda(t,x) = \sqrt{\lambda} \, b \big( \lambda t, \sqrt{\lambda}x \big) \,.
$$
The space $L^r(\R;L^q(\R^d))$ is invariant under the above scaling of $b$ exactly when $2/r + d/q = 1$. 

\medskip

The main contribution of this note is the proof of the {\em sharpness} of the ranges in~\eqref{e:range} and~\eqref{e:coeffi} for integrability estimates on the solution $u$ of~\eqref{e:PDE}. Although several examples are presented in~\cite{lady2} (and in~\cite{lady1} for the related case of elliptic problems), none of them can be easily adapted to the case of equation~\eqref{e:PDE}, as singularities are always present in further coefficients in the equation. We work in the whole space $\R^d$, however the solutions constructed in our examples decay fast at infinity. 

\medskip

In~\S\ref{s:gaussian} we show that if
$$
\frac{2}{r} + \frac{d}{q} > 1 \,, \qquad
\text{ with $1 \leq r,q \leq \infty$} 
$$
then in general $u$ does not enjoy uniformly in time any estimate better than in $L^1(\R^d)$, even for a smooth velocity field and a smooth and compactly supported initial datum. We achieve this using a perturbation argument. More in detail, we fix a (Gaussian) solution of a backward heat equation which converges to a Dirac mass as $t \uparrow 1$. We then ``convert'' the backward heat equation into a forward advection-diffusion equation by defining a suitable velocity field. We finally truncate both the velocity field and the initial datum to gain summability for the velocity field and compact support for the initial datum, and prove that the solution to this perturbed problem still develops a Dirac mass at time~$t=1$.

\medskip

In~\S\ref{s:self1} we address the case $r=\infty$ and $q=d$, which is borderline for the ranges of exponents in~\eqref{e:range} and~\eqref{e:coeffi}. In this case we find solutions by a self-similarity procedure, i.e., we look at solutions depending on the variable $y = x / \sqrt{1-t}$ only. In this context, for $d \geq 1$,  we construct a velocity field in $L^\infty([0,1];L^d(\R^d))$ and a smooth, bounded initial datum in~$L^2(\R^d)$ whose associated solution exits from $L^2(\R^d)$ at time~$t=1$. 

\section{Proof of the $L^2$ estimate under~\eqref{e:range}}\label{s:estimate}

In this section we prove the following theorem:

\begin{thm}\label{t:estimate}
Assume that $b \in L^r([0,1];L^q(\R^d))$, where $r$ and $q$ are as in~\eqref{e:range}. Then every solution $u$ in the parabolic class~\eqref{e:paraclass} of the advection-diffusion equation~\eqref{e:PDE} satisfies for $0 \leq t \leq 1$ the estimate
$$
\| u(t,\cdot) \|_{L^2(\R^d)} \leq C \| u_0 \|_{L^2(\R^d)} \,,
\quad \text{ with $C = C\left(d,r,q,\| b \|_{L^r([0,1];L^q(\R^d))}\right)$.}
$$ 
\end{thm}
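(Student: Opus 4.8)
The plan is to adapt the elementary energy estimate from the introduction, where boundedness of $b$ was used, to the case of merely integrable $b$. As before I would multiply~\eqref{e:PDE} by $u$ and integrate over $\R^d$ to arrive at the identity
\begin{equation*}
\frac{1}{2} \frac{d}{dt} \int_{\R^d} u^2 \, dx + \int_{\R^d} |\nabla u|^2 \, dx = \int_{\R^d} u \, b \cdot \nabla u \, dx \,.
\end{equation*}
The whole difficulty now lies in controlling the right-hand side, since $b$ is no longer bounded and so Young's inequality cannot be applied directly. The key step is to estimate $\int u \, b \cdot \nabla u \, dx$ using H\"older's inequality in the space variable, splitting the three factors $u$, $b$ and $\nabla u$ into the exponents $q$ for $b$ and then a conjugate pair for $u$ and $\nabla u$.

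First I would write, by H\"older with exponents $q$, $\sigma$ and $2$ (where $1/q + 1/\sigma + 1/2 = 1$),
\begin{equation*}
\left| \int_{\R^d} u \, b \cdot \nabla u \, dx \right| \leq \| b(t,\cdot) \|_{L^q} \, \| u(t,\cdot) \|_{L^\sigma} \, \| \nabla u(t,\cdot) \|_{L^2} \,,
\end{equation*}
so that $\sigma = 2q/(q-2)$. The factor $\| u \|_{L^\sigma}$ must then be interpolated between $\| u \|_{L^2}$ and $\| \nabla u \|_{L^2}$ (via $\| u \|_{L^{2^*}}$, using the Sobolev embedding $H^1 \hookrightarrow L^{2^*}$ with $2^* = 2d/(d-2)$ for $d \geq 3$, and the corresponding endpoint embeddings for $d = 1, 2$). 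This gives an interpolation inequality of the form $\| u \|_{L^\sigma} \leq C \| u \|_{L^2}^{1-\theta} \| \nabla u \|_{L^2}^{\theta}$ for a suitable $\theta \in [0,1]$; the precise value of $\theta$, and the requirement that it be such that the $\nabla u$ powers can be absorbed, is exactly where the algebraic constraint $2/r + d/q \leq 1$ enters. Dimensional analysis / scaling fixes $\theta = d/q$.

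Combining these, the right-hand side is bounded by $C \| b \|_{L^q} \, \| u \|_{L^2}^{1-\theta} \, \| \nabla u \|_{L^2}^{1+\theta}$, and since $1 + \theta = 1 + d/q \leq 2$ precisely in the range~\eqref{e:range}, I would apply Young's inequality to split off a factor $\varepsilon \| \nabla u \|_{L^2}^2$ that can be absorbed into the dissipation term $\int |\nabla u|^2$ on the left-hand side. What remains after absorption is a bound of the form
\begin{equation*}
\frac{1}{2} \frac{d}{dt} \| u(t,\cdot) \|_{L^2}^2 + \frac{1}{2} \| \nabla u(t,\cdot) \|_{L^2}^2 \leq C \, \| b(t,\cdot) \|_{L^q}^{\beta} \, \| u(t,\cdot) \|_{L^2}^2 \,,
\end{equation*}
where the exponent $\beta$ on $\| b \|_{L^q}$ arises from the Young splitting and should turn out to be exactly $r$, i.e. $\beta = 2q/(2q - d) = r$ at the critical relation, which is why the $L^r_t$ integrability of $b$ is the natural assumption.

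The final step is a Gr\"onwall argument: writing $g(t) = C \| b(t,\cdot) \|_{L^q}^r$, which by hypothesis lies in $L^1([0,1])$ since $b \in L^r_t L^q_x$, I conclude $\| u(t,\cdot) \|_{L^2}^2 \leq \| u_0 \|_{L^2}^2 \exp\big( \int_0^t g(s)\, ds \big)$, giving the desired bound with a constant $C$ depending only on $d$, $r$, $q$ and $\| b \|_{L^r_t L^q_x}$. The main obstacle I anticipate is handling the borderline and low-dimensional cases carefully: for $d = 1, 2$ the Sobolev embedding and hence the interpolation exponent $\theta$ degenerate, which is exactly why~\eqref{e:range} carries the separate restrictions $r \in [2,4]$, $q \in [2,\infty]$ for $d = 1$ and the endpoint condition $q > d$ for $r = \infty$; verifying that the absorption still works (strictly, not just at the threshold) and treating the equality case $2/r + d/q = 1$ versus the strict inequality will require the most care.
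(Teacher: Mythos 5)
Your proposal follows essentially the same route as the paper's proof: the energy identity, H\"older with exponents $(q,\sigma,2)$ so that $\sigma = 2q/(q-2)$, the Gagliardo--Nirenberg interpolation with exponent $\theta = d/q$ (the paper's $\alpha$), absorption of $\|\nabla u\|_{L^2}^{1+\theta}$ into the dissipation via Young, and Gronwall. The only slip is the explicit formula for the exponent on $\|b\|_{L^q}$, which should be $\beta = 2/(1-\theta) = 2q/(q-d)$ rather than $2q/(2q-d)$; since your identification $\beta = r$ at the critical relation $2/r + d/q = 1$ is nevertheless correct, the argument goes through exactly as in the paper.
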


\begin{proof}
We start by multiplying the equation~\eqref{e:PDE} times $u$. Integrating over~$\R^d$ and integrating by parts we get
\begin{equation}\label{e:first}
\frac{1}{2} \frac{d}{dt} \int_{\R^d} |u|^2 \, dx + \int_{\R^d} | \nabla u|^2 \, dx \leq
\int_{\R^d} | bu| \, | \nabla u | \, dx \,.
\end{equation}
Given $2 \leq p \leq \infty$ we estimate the right hand side with
\begin{equation}\label{e:conto1}
\begin{aligned}
\int_{\R^d}  | bu| \, | \nabla u | \, dx
& \leq 
\| bu \|_{L^2(\R^d)} \| \nabla u\|_{L^2(\R^d)} \\
& \leq
\| b \|_{L^{\frac{2p}{p-2}}(\R^d)} \| u \|_{L^p(\R^d)} \| \nabla u\|_{L^2(\R^d)} \,.
\end{aligned}
\end{equation}
Recall Gagliardo-Nirenberg-Lady\v zhenskaya's inequality: for $0 \leq \alpha \leq 1$ we can estimate
\begin{equation}\label{e:GNL}
\| u \|_{L^p} \leq C \| \nabla u\|^\alpha_{L^2(\R^d)} \|u\|^{1-\alpha}_{L^2(\R^d)} \,,
\end{equation}
where
\begin{equation}\label{e:GNLcond}
\frac{1}{p} = \frac{1}{2} - \frac{\alpha}{d} \,.
\end{equation}
Notice that when $d=1$ we have the constraint $\alpha \leq 1/2$. Using~\eqref{e:GNL} in~\eqref{e:conto1} we obtain
$$
\int_{\R^d}  | bu| \, | \nabla u | \, dx
\leq
C \| b \|_{L^{\frac{2p}{p-2}}(\R^d)} \|u\|^{1-\alpha}_{L^2(\R^d)} \| \nabla u\|^{1+\alpha}_{L^2(\R^d)} \,.
$$
For $0 \leq \alpha <1$ we can apply Young's inequality to obtain
\begin{equation}\label{e:second}
\int_{\R^d}  | bu| \, | \nabla u | \, dx
\leq
C \left( \| b \|_{L^{\frac{2p}{p-2}}(\R^d)} \|u\|^{1-\alpha}_{L^2(\R^d)} \right)^{\frac{2}{1-\alpha}}
+ \frac{1}{2} \,  \| \nabla u\|^2_{L^2(\R^d)} \,.
\end{equation}
Using~\eqref{e:second} in~\eqref{e:first} we obtain
$$
\frac{1}{2} \frac{d}{dt} \int_{\R^d} u^2 \, dx + \frac{1}{2} \int_{\R^d} | \nabla u|^2 \, dx 
\leq
C \left( \| b \|_{L^{\frac{2p}{p-2}}(\R^d)} \|u\|^{1-\alpha}_{L^2(\R^d)} \right)^{\frac{2}{1-\alpha}} \,,
$$
from which we get in particular
\begin{equation}\label{e:conto2}
\frac{1}{2} \frac{d}{dt} \|u\|^2_{L^2(\R^d)} 
\leq
C \| b \|^{\frac{2}{1-\alpha}}_{L^{\frac{2p}{p-2}}(\R^d)} \|u\|^2_{L^2(\R^d)} \,.
\end{equation}
The estimate in~\eqref{e:conto2} allows use the use of Gronwall's lemma and eventually to establish a uniform in time estimate on~$u$ in $L^2(\R^d)$, provided the following integrability condition is satisfied:
\begin{equation}\label{e:integrability}
\int_0^1 \| b \|^{\frac{2}{1-\alpha}}_{L^{\frac{2p}{p-2}}(\R^d)} \, dt < \infty \,.
\end{equation}
We conclude by showing that, if $r$ and $q$ are as in~\eqref{e:range}, then~\eqref{e:integrability} holds. Denoting by
$$
r = \frac{2}{1-\alpha} 
\qquad \text{ and } \qquad
q = \frac{2p}{p-2}\,,
$$
we observe that for $d\geq2$ we have $r \in [2,\infty)$, since $0 \leq \alpha < 1$, while for~$d=1$ the constraint $0\leq \alpha\leq 1/2$ implies $r \in [2,4]$. 
We compute
$$
\frac{2}{r} + \frac{d}{q} 
= \frac{2}{\frac{2}{1-\alpha}} + \frac{d}{\frac{2p}{p-2}}
= 1 - \alpha + d \left( \frac{1}{2} - \frac{1}{p} \right)
= 1 \,,
$$
where we have used~\eqref{e:GNLcond} in the last equality. In particular we also find the range for $q$ in~\eqref{e:range}. Since we are dealing with a bounded interval of times, we can always increase the assumption on the integrability in time. This proves that, if $r$ and $q$ are as in~\eqref{e:range}, then~\eqref{e:integrability} holds, as wanted.
\end{proof} 

\begin{rem}
We remark that with analogue computations it is possible to prove higher integrability estimates for the solution, still under the same assumption as in Theorem~\ref{t:estimate} that the velocity field belongs to $L^r([0,1];L^q(\R^d))$ with $r$ and $q$ as in~\eqref{e:range}. In fact, one can easily check that for any $\gamma >1$ the estimate~\eqref{e:conto2} can be improved to
$$
\frac{1}{2} \frac{d}{dt} \||u|^\gamma\|^2_{L^2(\R^d)} 
\leq
C \| b \|^{\frac{2}{1-\alpha}}_{L^{\frac{2p}{p-2}}(\R^d)} \||u|^\gamma\|^2_{L^2(\R^d)} \,,
$$
and therefore 
$$
\| u(t,\cdot) \|_{L^{2\gamma}(\R^d)} \leq C \| u_0 \|_{L^{2\gamma}(\R^d)} \,,
\quad \text{ with $C = C\left(\gamma,d,r,q,\| b \|_{L^r([0,1];L^q(\R^d))}\right)$.}
$$ 

\end{rem}

\section{An alternative proof using Duhamel formula}
\label{s:duhamel}

We now provide a different proof of the estimate in Theorem~\ref{t:estimate}, which allows us to get a slightly different range for the exponents $r$ and $q$ than in~\eqref{e:range}. In particular, in dimension $d=1$ Theorem~\ref{l:swap} below states that, 
if the initial datum $u_0$ is bounded, than we can get the $L^2$ estimate for~$u$ if~$b \in L^r ([0, 1]; L^q (\R^d))$ provided that 
$$
    \frac{2}{r} + \frac{1}{q} <1 \,. 
$$
Note that compared to assumption~\eqref{e:range} we have lost the limit case $2/r + 1/q =1$, but we have 
removed the restriction that $q \ge 2$.
\begin{thm}
\label{l:swap}
Assume that $u_0 \in L^1 (\R^d) \cap L^\infty (\R^d)$, that $b \in L^r ([0, 1]; L^q (\R^d))$ and that 
\begin{equation}
\label{e:coeffi}
      \frac{2}{r} + \frac{d}{q} <1 \,, \qquad
      \text{ with $r \in [2,\infty)$ and $q \in (d,\infty]$.} 
\end{equation}
Then every solution $u$ in the parabolic class~\eqref{e:paraclass} of the advection-diffusion equation~\eqref{e:PDE} satisfies
for every $p \in [1, + \infty[$ and $0 \leq t \leq 1$ the estimate 
\begin{equation}
\label{e:interpolo}
   \| u(t, \cdot) \|_{L^p (\R^d)} \leq 
   C \|u_0 \|_{L^\infty (\R^d)}^{\frac{p-1}{p}} \| u_0 \|_{L^1 (\R^d)}^{\frac{1}{p}} \,, 
   \quad C = C(p, r, q, d,\| b\|_{ L^r ([0, 1]; L^q (\R^d))})
\end{equation}
and furthermore for $0 \leq t \leq 1$ we have 
\begin{equation}
\label{e:interpolo2}
   \| u(t, \cdot) \|_{L^\infty (\R^d)} \leq 
   C \|u_0 \|_{L^\infty (\R^d)} \,, 
   \quad C = C( r, q, d,\| b\|_{ L^r ([0, 1]; L^q (\R^d))}) \,. 
\end{equation}
\end{thm}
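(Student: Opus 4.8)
The plan is to base everything on the Duhamel representation formula and to single out the $L^\infty$ bound~\eqref{e:interpolo2} as the main estimate, deducing~\eqref{e:interpolo} from it by interpolation. Writing~\eqref{e:PDE} as $\partial_t u - \Delta u = - \div(bu)$ and using that the heat semigroup commutes with spatial derivatives, I would start from
\[
u(t,\cdot) = e^{t\Delta} u_0 - \int_0^t \nabla G_{t-s} * \big( b(s,\cdot)\, u(s,\cdot) \big)\, ds \,,
\]
where $G_\tau(x) = (4\pi\tau)^{-d/2} e^{-|x|^2/(4\tau)}$ is the heat kernel and the divergence has been transferred onto the kernel as a gradient. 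The ingredients I would use repeatedly are that $e^{t\Delta}$ is a contraction on every $L^m(\R^d)$ and the scaling identity $\|\nabla G_\tau\|_{L^m(\R^d)} = C_m\,\tau^{-\frac12 - \frac{d}{2}(1 - \frac1m)}$, together with the a priori mass bound $\|u(t,\cdot)\|_{L^1(\R^d)} \le \|u_0\|_{L^1(\R^d)}$ already recorded in the introduction.

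To prove~\eqref{e:interpolo2} I would set $M(t) := \sup_{0\le s\le t} \|u(s,\cdot)\|_{L^\infty(\R^d)}$ and bound the Duhamel formula in $L^\infty$. Since $\|e^{t\Delta} u_0\|_{L^\infty} \le \|u_0\|_{L^\infty}$, it remains to control the integral term. For a parameter $\rho$ to be fixed, I would apply Young's convolution inequality with conjugate exponents $(m,m')$ to peel off $\|\nabla G_{t-s}\|_{L^m}$, then Hölder in space via $\|b\,u\|_{L^{m'}} \le \|b\|_{L^q} \|u\|_{L^\rho}$ with $1/m' = 1/q + 1/\rho$, and finally interpolate $\|u(s,\cdot)\|_{L^\rho} \le \|u_0\|_{L^1}^{1/\rho}\, M(t)^{1-1/\rho}$. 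A Hölder inequality in time against $\|b\|_{L^r([0,1];L^q(\R^d))}$ then leaves the scalar integral $\int_0^t \|\nabla G_{t-s}\|_{L^m}^{r'}\, ds$, which by the scaling identity converges precisely when $\big( \frac12 + \frac{d}{2}(\frac1q + \frac1\rho) \big) r' < 1$, that is, when $\frac{2}{r} + \frac{d}{q} + \frac{d}{\rho} < 1$. Here the strict subcriticality hypothesis~\eqref{e:coeffi} is exactly what leaves room to choose a finite $\rho$ large enough for this to hold (and for $m,m' \ge 1$).

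Collecting these bounds yields an inequality of the form $M(t) \le \|u_0\|_{L^\infty} + C\, M(t)^{1-1/\rho}$, with $C$ depending only on $d,r,q$, $\|b\|_{L^r([0,1];L^q(\R^d))}$ and $\|u_0\|_{L^1}$; since the exponent $1-1/\rho$ is strictly less than $1$, Young's inequality absorbs the nonlinear term and produces the uniform bound~\eqref{e:interpolo2}. Finally I would obtain~\eqref{e:interpolo} for every $p \in [1,\infty)$ by interpolating between the mass bound and~\eqref{e:interpolo2}:
\[
\|u(t,\cdot)\|_{L^p} \le \|u(t,\cdot)\|_{L^1}^{1/p}\, \|u(t,\cdot)\|_{L^\infty}^{1-1/p} \le C\, \|u_0\|_{L^1}^{1/p}\, \|u_0\|_{L^\infty}^{1-1/p} \,,
\]
the case $p=1$ being already the mass bound itself.

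The step I expect to be the main obstacle is not the exponent bookkeeping but the justification of the closing argument: taking the supremum in time and absorbing $M(t)^{1-1/\rho}$ is legitimate only once $M(t)$ is known to be finite a priori, and the Duhamel identity itself must be justified for a solution merely in the parabolic class~\eqref{e:paraclass}. I would handle both points by approximation, replacing $b$ with smooth truncations $b_n$ whose norms $\|b_n\|_{L^r([0,1];L^q(\R^d))}$ stay uniformly bounded: for smooth $b_n$ the solution is smooth and bounded, so $M$ is finite and the bootstrap closes with a constant independent of $n$, after which one passes to the limit using these uniform estimates together with the stability of solutions in the parabolic class. A continuity argument in $t$, exploiting the local finiteness of $\|u(t,\cdot)\|_{L^\infty}$ for the regularized problems, achieves the same effect.
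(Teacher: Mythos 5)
Your overall architecture is the same as the paper's: Duhamel formula, Young's convolution inequality to peel off $\|\nabla G_{t-s}\|$, H\"older in space and time against $\|b\|_{L^r([0,1];L^q)}$, the scaling of $\|\nabla G_\tau\|_{L^m}$, and a final interpolation with the $L^1$ mass bound. The exponent bookkeeping is correct, and your closing remark about justifying the Duhamel identity and the a priori finiteness of $M(t)$ by regularization is a legitimate way to make the argument rigorous (the paper itself only gives a formal proof). The genuine problem is in how you close the estimate. By bounding $\|b u\|_{L^{m'}}\le \|b\|_{L^q}\|u\|_{L^\rho}$ with $\rho<\infty$ and then interpolating $\|u(s,\cdot)\|_{L^\rho}\le \|u_0\|_{L^1}^{1/\rho}M(t)^{1-1/\rho}$, you inject $\|u_0\|_{L^1}$ into the right-hand side. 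After absorbing $M(t)^{1-1/\rho}$ by Young's inequality you do not get~\eqref{e:interpolo2}: you get $M(t)\le 2\|u_0\|_{L^\infty}+C\big(d,r,q,\|b\|\big)\,\|u_0\|_{L^1}$, and the additive $\|u_0\|_{L^1}$ term cannot be dominated by $C\|u_0\|_{L^\infty}$ (take $u_0$ with large mass and small sup-norm). Consequently the multiplicative form $C\|u_0\|_{L^\infty}^{(p-1)/p}\|u_0\|_{L^1}^{1/p}$ in~\eqref{e:interpolo} does not follow from your bound either. Since the two norms of $u_0$ scale identically under $u\mapsto\lambda u$, no normalization rescues the homogeneity.

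The fix is to close the $L^\infty$ estimate \emph{in $L^\infty$ alone}, i.e.\ take $\rho=\infty$: estimate $\|bu\|_{L^{q}}\le\|b\|_{L^q}\|u\|_{L^\infty}$ and pair it with $\|\nabla G_{t-s}\|_{L^{q'}}$, whose $r'$-th power is time-integrable precisely when $2/r+d/q<1$ --- so the strict subcriticality is still exactly what you need, and your extra room $d/\rho$ is unnecessary. The price is that the right-hand side becomes \emph{linear} in $M(t)$, so Young's inequality no longer absorbs it; this is where the paper's Lemma~\ref{l:ellepi} (applied with $p=\infty$) does something you must replicate: the coefficient of $\sup_s\|u(s,\cdot)\|_{L^\infty}$ carries a factor $\tau^{(\alpha+1)/r^\ast}$ which is small for small $\tau$, and the paper closes via a first-doubling-time argument and iterates over $O(1/\beta)$ time intervals of fixed length $\beta=\beta(d,r,q,\|b\|)$, yielding $C=2^{n+1}$ depending only on the allowed quantities. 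With that replacement (or an equivalent continuity-in-time argument on the regularized problems you already introduce), your proof becomes correct and coincides in substance with the paper's.
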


Observe that the estimates in~\eqref{e:interpolo} and~\eqref{e:interpolo2} can be seen as a priori estimates on solutions of~\eqref{e:PDE}. 

The proof of Theorem~\ref{l:swap} is based on the following result. It allows to estimate the $L^p$-norm of a solution of~\eqref{e:PDE} in terms of the same norm of the initial datum provided that $p$ is large enough with respect to $q$, as required in the second inequality in~\eqref{e:d:rel} below.
Notice that this second condition is natural in this context, since it allows to give a distributional meaning to the second term in~\eqref{e:PDE}. 

\begin{lem}
\label{l:ellepi}
Assume $b \in L^r ([0, 1]; L^q (\R^d))$ and that the initial datum satisfies ${u_0 \in L^p (\R^d)}$.
Assume furthermore that $p \in [1, + \infty]$ and that 
\begin{equation}
\label{e:d:rel}
   \frac{2}{r} + \frac{d}{q} <1 \qquad \text{ and } \qquad
    \frac{1}{p} + \frac{1}{q} \leq 1\,. 
\end{equation}
Then for $0 \leq t \leq 1 $ we have 
\begin{equation}
\label{e:d:ellepi}
  \| u(t, \cdot) \|_{L^p (\R^d)} \leq C \| u_0 \|_{L^p(\R^d)}\,, 
  \quad 
  C = C(r, q, d,\| b\|_{ L^r ([0, 1]; L^q (\R^d))}) \,.  
\end{equation}
\end{lem}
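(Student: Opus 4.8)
The plan is to use the \emph{Duhamel} (mild) representation of the solution. Writing~\eqref{e:PDE} as $\partial_t u - \Delta u = - \div(bu)$ and letting $G_\tau$ denote the heat kernel at time $\tau$, the variation-of-constants formula reads
\[
u(t, \cdot) = G_t * u_0 - \int_0^t \nabla G_{t-s} * \big( b(s, \cdot)\, u(s, \cdot) \big) \, ds \,,
\]
where the spatial divergence has been integrated by parts onto the kernel so that it acts as $\nabla G_{t-s}$. The first term is harmless: since $\| G_\tau * f \|_{L^p(\R^d)} \leq \| f \|_{L^p(\R^d)}$, it is bounded by $\| u_0 \|_{L^p(\R^d)}$, and the whole game is to control the Duhamel integral by $\sup_{[0,t]} \| u \|_{L^p(\R^d)}$ with a small prefactor.

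Next I would estimate the integrand by combining Young's convolution inequality with H\"older's inequality in space. Applying Young with $\nabla G_{t-s} \in L^m(\R^d)$ and $b(s) u(s) \in L^a(\R^d)$, where $1 + 1/p = 1/m + 1/a$, and then splitting $\| b(s) u(s) \|_{L^a} \leq \| b(s) \|_{L^q} \| u(s) \|_{L^p}$ via H\"older with $1/a = 1/p + 1/q$, forces $m = q' = q/(q-1)$. This is exactly where the second hypothesis in~\eqref{e:d:rel}, namely $1/p + 1/q \leq 1$, enters, guaranteeing $a \geq 1$ so that the H\"older split is licit. A scaling computation for the Gaussian gives $\| \nabla G_\tau \|_{L^{q'}(\R^d)} = C \tau^{-\frac12 - \frac{d}{2q}}$, whence
\[
\| u(t, \cdot) \|_{L^p(\R^d)} \leq \| u_0 \|_{L^p(\R^d)} + C \int_0^t (t-s)^{-\frac12 - \frac{d}{2q}} \| b(s, \cdot) \|_{L^q(\R^d)} \, \| u(s, \cdot) \|_{L^p(\R^d)} \, ds \,.
\]

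I would then integrate in time using H\"older with exponent $r$, bounding $\| u(s) \|_{L^p}$ by $M(t) := \sup_{0 \leq s \leq t} \| u(s, \cdot) \|_{L^p(\R^d)}$ and pulling out $\| b \|_{L^r([0,t];L^q(\R^d))}$. The remaining time factor $\big( \int_0^t (t-s)^{-(\frac12 + \frac{d}{2q}) r'} \, ds \big)^{1/r'}$ is finite and equals $C t^\theta$ with $\theta = \frac12 ( 1 - \frac2r - \frac dq ) > 0$ precisely because of the first hypothesis $2/r + d/q < 1$; this is the structural reason that condition is imposed. Taking the supremum over $[0,t]$ yields $M(t) \leq \| u_0 \|_{L^p} + C t^\theta \| b \|_{L^r([0,t];L^q)} M(t)$.

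The main obstacle is closing this inequality, since the prefactor $C t^\theta \| b \|_{L^r([0,t];L^q)}$ need not be small on all of $[0,1]$. I would localize in time: partition $[0,1]$ into finitely many subintervals $[t_i, t_{i+1}]$ on each of which $C \| b \|_{L^r([t_i, t_{i+1}]; L^q(\R^d))} \leq 1/2$, which is possible with a number of steps $N$ depending only on $\| b \|_{L^r([0,1];L^q(\R^d))}$. Running the above estimate on $[t_i, t_{i+1}]$ --- with $u(t_i, \cdot)$ as initial datum, the heat semigroup being time-translation invariant and the length factor bounded by $1$ --- gives $\sup_{[t_i, t_{i+1}]} \| u \|_{L^p} \leq 2 \| u(t_i, \cdot) \|_{L^p}$, and chaining the $N$ steps yields~\eqref{e:d:ellepi} with constant $2^N$. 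A secondary but genuine technical point is the rigorous justification of the Duhamel identity for a solution merely in the parabolic class~\eqref{e:paraclass}, which I would establish first for smooth $b$ and then recover in general by approximation.
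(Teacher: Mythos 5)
Your proposal is correct and follows essentially the same route as the paper: Duhamel's formula, Young's convolution inequality combined with H\"older in space (forcing the $L^{q'}$ norm of $\nabla G$ and using $1/p+1/q\le 1$ exactly as you say), the scaling computation $\|\nabla G_\tau\|_{L^{q'}}\sim \tau^{-\frac12-\frac{d}{2q}}$, H\"older in time using $2/r+d/q<1$, and an absorption-plus-iteration over finitely many subintervals. The only cosmetic difference is that the paper gains the smallness of the prefactor from the factor $\tau^{\theta}$ on a short interval (keeping the full norm of $b$), whereas you shrink $\|b\|_{L^r([t_i,t_{i+1}];L^q)}$; both yield a number of iterations controlled by $\|b\|_{L^r([0,1];L^q)}$ and the same final constant of the form $2^N$.
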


In the following proof we denote by $C_{a, b}$ a constant depending on the quantities $a$ and $b$ only. Its precise value can vary from occurrence to occurrence. Also, we only provide a formal proof, which can be made rigorous by relying on suitable approximation arguments. 

\begin{proof}[Proof of Lemma~\ref{l:ellepi}]

First, we define the value $\tau$ by setting 
\begin{equation}
\label{e:d:tau}
    \tau : = \inf \big\{ t \in [0, 1]: \; \| u(t, \cdot) \|_{L^p(\R^d)} > 2 \| u_0 \|_{L^p(\R^d)} \big\} \,. 
\end{equation}
If $\tau = + \infty$, then~\eqref{e:d:ellepi} holds, so we can assume $\tau \leq 1$, which implies 
\begin{equation}
\label{e:d:contra}
   \| u(\tau, \cdot) \|_{L^p(\R^d)} = 2 \| u_0 \|_{L^p(\R^d)} \,. 
\end{equation}
Next, we point out that the Duhamel representation formula implies that 
\begin{equation}
\label{e:d:duhamel}
u(\tau, \cdot) = G(\tau, \cdot) \ast u_0 - \int_0^{\tau}
\nabla G(\tau-s, \cdot) \ast \big[ bu \big] (s, \cdot) ds \,.  
\end{equation}
In the previous expression, $G(t,x) := \frac{1}{(4 \pi t)^{d/2}} \, e^{-|x|^2 / (4 t)}$
is the heat kernel and~$\ast$ denotes the convolution computed with respect to the space variable only. 
By using the Young Theorem on convolutions we get 
\begin{equation}
\label{e:d:duhamel2}
\begin{aligned}
\| u(\tau, \cdot) \|_{L^p(\R^d)} & \leq \| G(\tau, \cdot) \ast u_0 \| _{L^p(\R^d)} + \left\| \int_0^{\tau}
\nabla G(\tau-s, \cdot) \ast \big[ bu \big] (s, \cdot) ds \right\|_{L^p(\R^d)}  \\ 
& \leq \| G(\tau, \cdot) \ast u_0 \| _{L^p(\R^d)} +  \int_0^{\tau} \| 
\nabla G(\tau-s, \cdot) \ast \big[ bu \big] (s, \cdot) \|_{L^p(\R^d)}  ds \\
& \stackrel{\text{Young}}{\leq} \| u_0 \| _{L^p(\R^d)} +
\int_0^\tau \| 
\nabla G(\tau-s, \cdot) \|_{L^{q^\ast}(\R^d)} \| b u (s, \cdot) \|_{L^b(\R^d)} ds 
\end{aligned}
\end{equation}
provided that $b$, $q^*\geq 1$ are chosen such that
\begin{equation}
\label{e:d:b}
\frac{1}{p} = \frac{1}{q^\ast} + \frac{1}{b} -1 \,.
\end{equation}
Next, we use the H\"older inequality 
\begin{equation}
\label{e:d:holder}
  \| b u (s, \cdot) \|_{L^b(\R^d)} \leq \| b\|_{L^q(\R^d)} \|u \|_{L^p(\R^d)} \,, 
  \quad \text{if \; \; $\frac{1}{b} = \frac{1}{q} + \frac{1}{p}$}
\end{equation}
and we recall that, owing to~\eqref{e:d:tau}, 
$$
 \| u (s, \cdot)\|_{L^p(\R^d)} \leq 2 \| u_0\|_{L^p(\R^d)}
 \quad \text{for every $s \leq \tau$.}
 $$  
We plug the above inequalities into~\eqref{e:d:duhamel2} and we arrive at 
\begin{equation}
\label{e:d:duhamel3}
\begin{aligned}
& \| u(\tau, \cdot) \|_{L^p(\R^d)}  \leq 
\| u_0 \| _{L^p(\R^d)} \\
& \qquad \qquad \qquad \qquad + 2 \|u_0\|_{L^p(\R^d)} \! \! \! 
\int_0^\tau \! \! \| 
\nabla G(\tau-s, \cdot) \|_{L^{q^\ast}(\R^d )} \! \| b  (s, \cdot) \|_{L^q(\R^d)} ds\\
& \stackrel{\text{H\"older}}{\leq} 
\| u_0 \| _{L^p(\R^d)} + 2 \|u_0\|_{L^p(\R^d)} \| 
\nabla G \|_{L^{r^\ast}([0, \tau]; L^{q^\ast}(\R^d))} 
\| b  \|_{L^{r}([0, 1]; L^q(\R^d))} \,,
\end{aligned}
\end{equation}
provided that 
\begin{equation}
\label{e:d:erre}
\frac{1}{r} + \frac{1}{r^\ast} =1 \,. 
\end{equation}
We now focus on the term involving $\nabla G$: since 
$$
    | \nabla G (t, x) | = C_d \frac{|x|}{t^{d/2+1}} e^{-\frac{-|x|^2}{4t}} \,,
$$
then by using spherical coordinates we get 
\begin{equation*}
\begin{aligned}
    \| \nabla G (t, \cdot) \|_{L^{q^\ast} (\R^d)} & = 
   C_{d, q^\ast} \left( \int_0^\infty 
   \rho^{d-1} \frac{\rho^{q^\ast}}{t^{q^\ast (d/2+1)}} e^{-\frac{-q^\ast \rho^2}{4t}} d\rho 
   \right)^{1/q^\ast} \\
   & \stackrel{z = \rho /2 t^{1/2}}{=} C_{d, q^\ast}
   \left( \int_0^\infty z^{d-1+q^\ast} e^{- q^\ast z^2}
  t^{\frac{d- q^{\ast} (d+1)}{2}}  d z
   \right)^{1/q^\ast} \\ & =
   C_{d, q^\ast} t^{\frac{d- q^{\ast} (d+1)}{2q^\ast}} \,. 
\end{aligned}
\end{equation*}
This implies that 
\begin{equation}
\label{e:d:nablaG}
\|
\nabla G \|_{L^{r^\ast}([0, \tau]; L^{q^\ast}(\R^d))} = 
C_{d, q^\ast, r^\ast} \tau^{\frac{\alpha +1}{r^\ast}  }
\end{equation}
provided that 
\begin{equation}
\label{e:d:alpha}
   \alpha = \big[d- q^{\ast} (d+1)\big]   \frac{r^\ast}{2q^\ast} > - 1 \,.
\end{equation}
We now plug the inequality~\eqref{e:d:nablaG} into~\eqref{e:d:duhamel3} and we use~\eqref{e:d:contra}. We conclude that 
$$
   \| u(\tau, \cdot) \|_{L^p(\R^d)}  = 2 \| u_0\|_{L^p(\R^d)}
   \leq \| u_0\|_{L^p(\R^d)} \Big[ 1 + 
   \| b  \|_{L^{r}([0, 1]; L^q(\R^d))} 
   C_{d, q^\ast, r^\ast} \tau^{\frac{\alpha +1}{r^\ast}  }
   \Big]\,,
$$
which implies that 
$$
    \tau \ge \beta:=  \left(   \frac{1}{C_{d, q^\ast, r^\ast}  
    \| b  \|_{L^{r}([0, 1]; L^q(\R^d))} }\right)^{\frac{r^\ast}{\alpha +1 }} \,.
$$
This implies that~\eqref{e:d:ellepi} holds on $[0, \beta]$ with $C=2$. 
Note furthermore that $\beta$ does \emph{not} depend on $\| u_0\|_{L^p(\R^d)}$ and hence we can iterate the above argument. More precisely, let $n$ be the integer part of $1/\beta$, then we apply the above argument on the intervals $[0, \beta],$ $[\beta, 2 \beta]$, $[2 \beta, 3 \beta], \dots [n \beta, 1]$
and get that~\eqref{e:d:ellepi} holds on $[0, 1]$ with $C=2^{n+1}$. 

To conclude, we discuss the range of $r$ and $q$. By combining~\eqref{e:d:b} and~\eqref{e:d:holder} we get that 
\begin{equation}
\label{e:d:cu}
   \frac{1}{q} + \frac{1}{q^\ast} =1
\end{equation}
and by plugging the above relation and~\eqref{e:d:erre} into~\eqref{e:d:alpha} we arrive at 
$$
   \frac{2}{r} + \frac{d}{q} <1 \,.
$$
Finally, we require that the number $b$ in~\eqref{e:d:b} satisfies $b \in [1, + \infty]$ and owing to~\eqref{e:d:cu} it suffices to impose that 
$$
   \frac{1}{b} = \frac{1}{p} + \frac{1}{q} \leq 1\,. \eqno\qedhere
$$
\end{proof}
We can now give the 
\begin{proof}[Proof of Theorem~\ref{l:swap}]
We rely on an elementary interpolation argument. First, we recall that 
\begin{equation}
\label{e:elleuno}
  \| u(t, \cdot) \|_{L^1 (\R^d)} \leq 
  \| u_0 \|_{L^1 (\R^d)}, \quad \text{ for $0 \leq t \leq 1$.} 
\end{equation}
Next, we apply~\eqref{e:d:ellepi} with $p=+ \infty$ and we obtain~\eqref{e:interpolo2}. Note that since $p=+ \infty$  the second condition in~\eqref{e:d:rel} is satisfied in this case for every ${q \in [1, + \infty]}$. By interpolating between~\eqref{e:interpolo2} and~\eqref{e:elleuno} we then arrive at~\eqref{e:interpolo}.
\end{proof}

\section{First example: perturbation of a Gaussian}\label{s:gaussian}

In this section we prove the following result:

\begin{thm}
\label{t:pert}
Let $1 \leq r,q \leq \infty$ be such that
\begin{equation}\label{e:negrange}
\frac{d}{q} + \frac{2}{r} > 1 \,.
\end{equation}
Then there exists a velocity field $b \in L^r ([0,1);L^q(\R^d)) \cap C^
\infty([0,1) \times \R^d)$, an 
initial datum $u_0 \in C^\infty_c(\R^d)$, and a solution $u \in C^\infty([0,1) \times \R^d)$
of the Cauchy problem for~\eqref{e:PDE} which has no uniform in time integrability bounds better
than in~$L^1(\R^d)$, or equivalently, $u$ is not uniformly in time equi-integrable on~$\R^d$.
In particular, for any $1 < p \leq \infty$ we have
$$
\lim_{t \to 1^-} \| u(t,\cdot) \|_{L^p(\R^d)} = \infty \,.
$$
\end{thm}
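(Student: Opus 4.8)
The plan is to realize a concentrating Gaussian as an honest solution of~\eqref{e:PDE} and then localize. I start from the backward heat solution
\[
U(t,x) = \frac{1}{(4\pi(1-t))^{d/2}}\, e^{-|x|^2/(4(1-t))},
\]
which satisfies $\partial_t U = -\Delta U$ and converges to a Dirac mass at the origin as $t\uparrow 1$. Writing $\partial_t U = \Delta U - 2\Delta U = \Delta U - \div(2\nabla U)$ and noting that $2\nabla U = \bar b\, U$ with $\bar b(t,x) := 2\nabla U/U = -x/(1-t)$, I see that $U$ solves the \emph{forward} equation $\partial_t U + \div(\bar b U) = \Delta U$ with the inward (contracting) field $\bar b$. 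This $\bar b$ is neither summable nor compactly supported, so the next step is to truncate.

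I fix a radial cutoff $\psi$, equal to $1$ on a large ball with compact support, set the truncation scale $R(t):=K(1-t)^\gamma$ for parameters $K\gg1$ and $\gamma\in(0,1/2)$, and define
\[
b(t,x) := -\frac{x}{1-t}\,\psi\big(x/R(t)\big), \qquad u_0\in C^\infty_c(\R^d),\ u_0\ge0,\ \mathrm{supp}\,u_0\subset B_{R(0)}.
\]
Then $b\in C^\infty([0,1)\times\R^d)$ and $u_0\in C^\infty_c$, so the Cauchy problem has a smooth solution $u$ on $[0,1)\times\R^d$, nonnegative by the maximum principle and with mass $\int u(t)\,dx\le\int u_0\,dx =: M$. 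A direct computation in spherical coordinates gives $\|b(t,\cdot)\|_{L^q(\R^d)}\le C\,(1-t)^{\gamma(1+d/q)-1}$, so $b\in L^r([0,1);L^q(\R^d))$ as soon as $\gamma(1+d/q)>1-\frac1r$; since $\gamma$ may be taken arbitrarily close to $1/2$, such a $\gamma<1/2$ exists precisely when $\frac12(1+d/q)>1-\frac1r$, i.e.\ exactly under hypothesis~\eqref{e:negrange} (the endpoints $q=\infty$ or $r=\infty$ are analogous).

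The heart of the argument is to show that a fixed fraction of the mass stays trapped in the shrinking ball $B_{cR(t)}$. I test the equation against the moving cutoff $\varphi(t,x):=\Phi(x/R(t))$, where $\Phi\ge0$ is radial, $\equiv1$ on $B_1$ and supported in $B_c$, and I study $m(t):=\int u\,\varphi\,dx$. Since $\varphi$ has compact support there are no boundary terms, and $\frac{d}{dt}m=\int u\,(\Delta\varphi + b\cdot\nabla\varphi + \partial_t\varphi)\,dx$. Using $R'/R=-\gamma/(1-t)$ and the fact that $b=-x/(1-t)$ on $\mathrm{supp}\,\varphi(t,\cdot)$, the drift and the cutoff's time derivative combine into
\[
\int u\,\big(\partial_t\varphi + b\cdot\nabla\varphi\big)\,dx = -\frac{1-\gamma}{(1-t)R(t)}\int u\,\big(x\cdot\nabla\Phi(x/R(t))\big)\,dx \ \ge\ 0,
\]
because $\Phi$ is radially decreasing, so $x\cdot\nabla\Phi\le0$, and $u\ge0$. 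Hence only diffusion can leak mass, and $\frac{d}{dt}m \ge \int u\,\Delta\varphi\,dx \ge -C R(t)^{-2} M$. The decisive point is that $\int_0^1 R(t)^{-2}\,dt = K^{-2}\int_0^1 (1-t)^{-2\gamma}\,dt<\infty$ precisely because $\gamma<1/2$; choosing $K$ large therefore gives $m(t)\ge M/2$ on $[0,1)$, whence $\int_{B_{cR(t)}}u\,dx\ge M/2$. By H\"older's inequality this forces, for every $1<p\le\infty$,
\[
\|u(t,\cdot)\|_{L^p(\R^d)} \ge \frac{M/2}{|B_{cR(t)}|^{1-1/p}} \longrightarrow \infty \quad\text{as } t\to1^-,
\]
so $u$ is not uniformly equi-integrable, which is the assertion.

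The main obstacle is exactly the balance of the two scales governing the problem: the diffusive spreading scale $\sqrt{1-t}$ and the truncation scale $R(t)=(1-t)^\gamma$. Concentration survives the truncation only when $R(t)\gg\sqrt{1-t}$, i.e.\ $\gamma<1/2$, so that the inward drift dominates and the mass leaked across $\partial B_{R(t)}$ by diffusion---of order $R(t)^{-2}$ per unit time---is integrable up to $t=1$. The content of the theorem is that this constraint on $\gamma$ is simultaneously compatible with the summability constraint $\gamma(1+d/q)>1-\frac1r$ exactly in the supercritical regime~\eqref{e:negrange}; the favorable sign in the displayed identity, produced by the interplay between the moving cutoff and the contracting field, is what lets the localized-mass estimate close.
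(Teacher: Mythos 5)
Your construction of the velocity field is the same as the paper's: the backward Gaussian $U$, the exact drift $-x/(1-t)$ obtained from $2\nabla U = \bar b\,U$, the truncation at the scale $R(t)=K(1-t)^\gamma$ with $\gamma<1/2$, and the identical bookkeeping $\gamma(1+d/q)>1-\tfrac1r$ showing that $\gamma\uparrow 1/2$ recovers exactly the supercritical range~\eqref{e:negrange}. Where you genuinely diverge is in proving that the truncated problem still concentrates. The paper writes $u=G+v$ and shows that the perturbation stays small in $L^1$ uniformly in time, by computing the source $\div(B\tilde\chi G)$, exploiting the positivity of $\Delta G$ outside the ball of radius $\sqrt{2d(1-t)}$, and making the source $L^1_{t,x}$-norm small through an explicit Gaussian-tail estimate in the truncation amplitude; the concentration is then inherited from $G$ because $\|v(t,\cdot)\|_{L^1}<1$ while $G(t,\cdot)\rightharpoonup\delta_0$. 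You instead prove a lower bound on the localized mass $m(t)=\int u\,\Phi(x/R(t))\,dx$ of the actual solution, using $u\ge 0$, the favorable sign of $\partial_t\varphi+b\cdot\nabla\varphi$ (moving cutoff versus contracting field), and the integrability of the diffusive leakage $R(t)^{-2}$ precisely when $\gamma<1/2$. Your computation is correct (I checked the sign identity and the choice ``first $\gamma$ close to $1/2$, then $K$ large'' is non-circular since $\|\Delta\Phi\|_\infty$ does not depend on $K$). The trade-off: your argument is more elementary and robust --- no Duhamel decomposition, no Gaussian tail asymptotics --- but it hinges on the nonnegativity of the data and only yields a trapped fraction of the mass; the paper's perturbative route works for signed data and gives the sharper structural statement that $u(t,\cdot)$ is $\delta_0$ plus a remainder of $L^1$-norm less than one. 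Both yield the stated blow-up of all $L^p$ norms, $p>1$, via the same final H\"older step.
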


\begin{rem}
The velocity field $b$ constructed in the proof in fact belongs to~$C^\infty_c([0,1-\tau] \times \R^d)$ for any $0<\tau<1$. In particular, the solution $u$ is the unique one in the ``local parabolic class''
$$
u \in L^2_{\rm loc}([0,1);H^1(\R^d)) \,, \quad \text{ with } \quad u' \in L^2_{\rm loc}([0,1);H^{-1}(\R^d)) \,,
$$
\end{rem}

\begin{proof}[Proof of Theorem~\ref{t:pert}] 
Rather than ``giving the formulas'' for the velocity field and the solution we proceed
step by step with their construction. The main idea is to obtain $u$ as a small $L^1$
perturbation of a solution of a backward heat equation which develops a Dirac mass at
$x=0$ for $t=1$. 

\medskip

\noindent {\bf Step 1: Backward heat equation.} 
Let
$$
G(t,x) := \frac{1}{[4 \pi(1-t)]^{d/2}} \, e^{-\frac{|x|^2}{4(1-t)}} \,, 
\qquad 0 \leq t < 1\,, \, x\in\R^d\,.
$$
It is immediate to check that
\begin{enumerate}
\item $\int_{\R^d} G(t,x) \, dx =1$ for any $0 \leq t < 1$ and $G(t,\cdot) \rightharpoonup \delta_0$ as $t \uparrow 1$;
\item $G$ solves the Cauchy problem
\begin{equation}\label{e:backPDE}
\begin{cases}
\partial_t G = - \Delta G \\
G(0,x) = \displaystyle \frac{1}{(4 \pi)^{d/2}} \, e^{-\frac{|x|^2}{4}} \in L^1 \cap L^\infty \cap C^\infty (\R^d) \,.
\end{cases}
\end{equation}
\end{enumerate} 

\medskip

\noindent {\bf Step 2: Definition of the velocity field.}
Ideally, we would like to transform the backward heat equation in~\eqref{e:backPDE} into the advection-diffusion equation
\begin{equation}\label{e:BG}
\partial_t G + \div (BG) = \Delta G \,,
\end{equation}
for some $B=B(t,x)$. In order to achieve this exactly, we would need to have
$$
\div (BG) = 2 \Delta G \,,
$$
which would require
\begin{equation}\label{e:B}
B(t,x) = - \frac{x}{1-t} \,.
\end{equation}
Indeed, with $B$ defined as in~\eqref{e:B} we have
\begin{equation}\label{e:gradB}
BG = - \frac{x}{1-t} \, \frac{1}{[4 \pi(1-t)]^{d/2}} \, e^{-\frac{|x|^2}{4(1-t)}} = 2 \nabla G \,.
\end{equation}
However, the problem is that with this definition $B$ grows linearly in $x$ and therefore has no 
integrability on~$\R^d$. For this reason we need to truncate it. 

Let $\varphi : [0,\infty) \to [0,\infty)$ be a nonnegative decreasing function with Lipschitz constant less or equal 
than $2$ and such that
\begin{equation}\label{e:varphi}
\varphi(y) = \left\{ \begin{array}{ll}
1 & \text{ for $0 \leq y \leq 1$} \\
0 & \text{ for $y \geq 2$.}
\end{array}\right.
\end{equation}
Given two parameters $\gamma$, $\beta>0$
to be suitably chosen in the following, we let
$$
\chi(t,x) = \varphi \left(\frac{|x|}{\gamma(1-t)^\beta} \right)
$$
and 
$$
b(t,x) = B(t,x) \chi(t,x) \,. 
$$
Moreover we also want to make the initial datum compactly supported. To this aim we choose $\phi\in C^\infty_c(\R^d)$ with $\phi \equiv 1$ on a large ball so that 
\begin{equation}\label{e:smallinitial}
\| G(0,\cdot) - G(0,\cdot)\phi\|_{L^1(\R^d)} < 1/2 \,.
\end{equation} 

\medskip

\noindent {\bf Step 3: Strategy for the next steps.}
The function $G$ is a solution of the transport-diffusion equation~\eqref{e:BG} with 
velocity field $B$ and with initial datum $G(0,\cdot)$, but not with their truncations $b$ and $G(0,\cdot)\phi$. However, we can write the 
solution~$u$ of the Cauchy problem 
\begin{equation}\label{e:PDEbu}
\begin{cases}
\partial_t u + \div(bu) = \Delta u \\
u(0,x) = G(0,x) \phi(x) = \displaystyle \frac{1}{(4 \pi)^{d/2}} \, e^{-\frac{|x|^2}{4}} \phi(x) \in C_c^\infty (\R^d) 
\end{cases}
\end{equation}
as 
$$
u = G + v \,, \qquad \text{$v$ ``perturbation''.}
$$
Our goal is then to choose the parameters $\gamma$ and $\beta$ in such a way that
\begin{enumerate}
\item $b \in L^r ([0,1);L^q(\R^d))$, for $r$ and $q$ as in~\eqref{e:negrange}, and
\item the norm of $v$ in $L^1([0,1);L^1(\R^d))$ is strictly less than $1$, so that $v$ cannot cancel out the concentration of $g$ for $t \uparrow 1$. 
\end{enumerate} 

\medskip

\noindent {\bf Step 4: Smallness of $v$.} 
The perturbation $v$ solves the Cauchy problem
$$
\begin{cases}
\partial_t v + \div (bv) - \Delta v = 
- \big( \partial_t G + \div (bG) - \Delta G \big) \\
v(0,x) = G(0,x) (\phi(x)-1) \,.
\end{cases}
$$
Observing that
$$
\begin{aligned}
\| v \|&_{L^\infty([0,1];L^1(\R^d))} \\
& \leq
\| G(0,x) (\phi(x)-1) \|_{L^1(\R^d)} + \int_0^1 \| \partial_t G + \div (bG) - \Delta G \|_{L^1(\R^d)} \, dt
\end{aligned}
$$
and recalling~\eqref{e:smallinitial}, we see that we need to choose $\gamma$ and $\beta$ so that
\begin{equation}\label{e:smallsource}
\big\| \partial_t G + \div (bG) - \Delta G \big\|_{L^1([0,1);L^1(\R^d))} < 1/2 \,.
\end{equation}
We proceed in several sub-steps.

\noindent {\it A preliminary computation.} We compute first of all
\begin{equation}\label{e:divergence}
\begin{aligned}
-\big( \partial_t G + \div (bG) - \Delta G \big)
& =
-\big( \partial_t G + \Delta G + \div (bG - 2 \nabla G) \big) \\
& = 
- \div (bG - 2 \nabla G) \\
& =
- \div ( BG - (1-\chi)BG -2\nabla G) \\
& =
\div (B \tilde \chi G )\,,
\end{aligned}
\end{equation}
where we have set 
\begin{equation}\label{e:chitilde}
\tilde \chi(t,x) = 1-\chi(t,x) 
\end{equation}
and we have used~\eqref{e:gradB}. 

\noindent {\it Positivity of the quantity $\div(BG)\tilde\chi$.} Using again~\eqref{e:gradB} we see
that $\div (BG) = 2\Delta G$. By direct inspection of the expression of $\Delta G$
we obtain that
$$
\Delta G(t,x) \geq 0 \quad \Longleftrightarrow \quad
\frac{|x|^2}{1-t}  \geq 2d \,.
$$
Recalling~\eqref{e:varphi} and~\eqref{e:chitilde} 
we discover that $\div(BG)\tilde \chi \geq 0$ is therefore implied by
\begin{equation}\label{e:prebeta}
\sqrt{2d} \sqrt{1-t} \leq \gamma (1-t)^\beta 
\qquad 
\text{ for any $0 \leq t \leq 1$.}
\end{equation}
Condition~\eqref{e:prebeta} is guaranteed if
\begin{equation}\label{e:condbeta}
0 < \beta < \frac{1}{2}  \qquad \text{ and } \qquad \gamma \geq \sqrt{2d}\,,
\end{equation}
condition that we assume from now on in our argument. 

\noindent {\it Computation of the norm of the source.} Using the computation in~\eqref{e:divergence} 
we can estimate the norm of the source as
\begin{equation}\label{e:newdiv}
\begin{aligned}
\big\| \partial_t G & + \div (bG) - \Delta G \big\|_{L^1([0,1);L^1(\R^d))} \\
& \stackrel{\eqref{e:divergence}}{=}
\int_0^1 \int_{\R^d} | \div (B \tilde \chi G) | \, dx \, dt \\
& \leq
\int_0^1 \int_{\R^d} | \div (B G) \tilde \chi | \, dx \, dt
+ \int_0^1 \int_{\R^d} | BG | | \nabla \tilde \chi | \, dx \, dt  \\
& \stackrel{(\star)}{=}
\int_0^1 \int_{\R^d} \div (B G) \tilde \chi  \, dx \, dt
+ \int_0^1 \int_{\R^d} | BG | | \nabla \tilde \chi | \, dx \, dt  \\
& \leq
2 \int_0^1 \int_{\R^d} | BG | | \nabla \tilde \chi | \, dx \, dt \,, 
\end{aligned}
\end{equation}
where in $(\star)$ we have used the positivity of $\div(BG)\tilde\chi$ 
and in the last inequality we have used the decay of $G$ at infinity. 
Observing that
$$
|\nabla \tilde \chi (t,x) |  \leq \frac{2}{\gamma(1-t)^\beta} \ind_{\{(x,t): \gamma(1-t)^\beta\leq {|x|}\leq 2{\gamma(1-t)^\beta}\}}
$$
we can estimate the right hand side in~\eqref{e:newdiv}
by employing spherical coordinates and by using the monotonicity 
of the  functions involved as
\begin{equation}\label{e:gammawill}
\begin{aligned}
\big\| \partial_t G & + \div (bG) - \Delta G \big\|_{L^1([0,1);L^1(\R^d))} \\
&\leq 
C_d \int_0^1 \int_{\gamma(1-t)^\beta}^{2\gamma(1-t)^\beta} 
\frac{r}{1-t} \cdot \frac{e^{-\frac{r^2}{4(1-t)}}}{\left[4 \pi (1-t)\right]^{d/2}} \cdot  
\frac{2}{\gamma(1-t)^{\beta}}\, r^{d-1} \, dr \,dt
\\
&\leq 
C_d \int_0^1\frac{(2\gamma(1-t)^\beta)^d}{1-t} \cdot 
\frac{e^{-\frac{\gamma^2(1-t)^{2\beta}}{4(1-t)}}}{\left[4 \pi (1-t)\right]^{d/2}}   \, dt 
\\
&= C_d \gamma^d\int_0^1\frac{1}{(1-t)^{1+\frac d 2 - d \beta}} \cdot e^{-\frac{\gamma^2(1-t)^{2\beta}}{4(1-t)}} \, dt .
\end{aligned}
\end{equation}
Recalling \eqref{e:condbeta} we can use the change of variable
$$
s = \frac{\gamma^2}{4} (1-t)^{2\beta-1}
$$
and estimate the right hand side of~\eqref{e:gammawill} with
$$
\begin{aligned}
\big\| \partial_t G + \div (bG) - \Delta G \big\|_{L^1([0,1);L^1(\R^d))}
& \leq 
C_{d,\beta} \gamma^{\alpha} \int_{\gamma^2/4}^{\infty} s^{\tilde\alpha} e^{-s} \, ds \\
& \stackrel{(\star)}{\leq}
C_{d,\beta} \gamma^{\alpha} \int_{\gamma^2/4}^{\infty} e^{s/2} e^{-s} \, ds \\
& = C_{d,\beta} \gamma^{\alpha} e^{-\gamma^2/8} \,,
\end{aligned}
$$
where $\alpha=\alpha(d,\beta)$ and $\tilde\alpha=\tilde\alpha(d,\beta)$ are real numbers 
and the estimate $(\star)$ holds for $\gamma$ sufficiently large. We can therefore obtain~\eqref{e:smallsource} by choosing $\gamma$ sufficiently large.

\medskip

\noindent {\bf Step 5: Integrability of $b$.} We can compute
$$
\| b(t,\cdot) \|^q_{L^q(\R^d)} 
= \int_{\{ |x| \leq \gamma (1-t)^\beta \}} \frac{|x|^q}{(1-t)^q} \, dx
= C_{d,\gamma} (1-t)^{\beta d + q(\beta-1)} \,,
$$
therefore
$$
\| b \|^r_{L^r ([0,1);L^q(\R^d))} = \int_0^1 (1-t)^{ \left(\frac{\beta d}{q} + \beta -1\right) r} \, dt \,.
$$
Hence $b \in L^r ([0,1);L^q(\R^d))$ if $\left(\frac{\beta d}{q} + \beta -1\right) r > -1$, which can be rewritten as
$$
\left( \frac{\beta}{1-\beta} \right) \frac{d}{q} + \left( \frac{1/2}{1-\beta} \right) \frac{2}{r} > 1 \,.
$$
Observing that as $\beta \uparrow 1/2$ one has
$$
\frac{\beta}{1-\beta} \to 1 
\qquad \text{ and } \qquad
\frac{1/2}{1-\beta} \to 1 
$$
we conclude that we can choose $0<\beta<1/2$ so that $b \in L^r ([0,1);L^q(\R^d))$ as long as $r$ and $q$ are 
as in~\eqref{e:negrange}. 
\end{proof} 

\section{Second example: self-similar solutions}\label{s:self1}

In this section we look at the case $r=\infty$ and $q=d$, which is borderline for the admissible ranges of exponents in~\eqref{e:range} and~\eqref{e:coeffi}. For any $d \geq 1$ we construct a velocity field in $L^\infty([0,1];L^d(\R^d))$ and a smooth, bounded initial datum in~$L^2(\R^d)$ whose associated solution exits from $L^2(\R^d)$ at the final time~$t=1$. In fact, one can easily check that the velocity fields constructed in this section also belong to all spaces described by~\eqref{e:negrange}. We employ here a self-similar construction, i.e., we look at solutions depending on the variable $y = x / \sqrt{1-t}$ only. This procedure has been inspired by the results in~\cite{mooney}.

\subsection{The one dimensional case.} 

We start by discussing the one dimensional case.

\begin{thm}\label{thm:counterex-self-similar}
There exists a velocity field $b \in L^\infty((0,1); L^1(\R)) \cap C^\infty([0,1)\times \R)$  and a smooth solution 
$u \in C^\infty([0,1)\times \R) \cap L^2((0,1); L^2(\R))$ of~\eqref{e:PDE}
with $u_0 \in (L^2 \cap L^\infty \cap C^\infty)(\R)$ such that 
$$
\limsup_{t\to 1^-} u(t,0) = \infty \qquad \text{ and } \qquad \lim_{t\to 1^-} \|u(t,\cdot) \|_{L^2(\R)} = \infty \,.
$$
\end{thm}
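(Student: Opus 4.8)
The plan is to construct an explicit self-similar solution in the similarity variable $y = x/\sqrt{1-t}$. Writing $u(t,x) = (1-t)^{-\mu} U(y)$ for a profile $U$ and an exponent $\mu$ to be chosen, and seeking the velocity field in the form $b(t,x) = (1-t)^{-1/2} \beta(y)$, I would substitute this ansatz into~\eqref{e:PDE}. Using the chain rule, $\partial_t u$ produces terms of the form $(1-t)^{-\mu-1}[\mu U + \tfrac{1}{2} y U']$, while $\Delta u$ and $\div(bu)$ scale as $(1-t)^{-\mu-1}$ as well, so the time-dependence factors out cleanly and the PDE collapses to a single ordinary differential equation in the variable $y$ for the profile $U$. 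This is the point of the self-similar substitution: it converts the parabolic PDE into an ODE, and the scaling $b \sim (1-t)^{-1/2}\beta(y)$ is exactly the one compatible with the critical space $L^\infty_t L^1_x$ in dimension one.

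\textbf{Choice of profile and velocity.} Rather than solving the ODE for $U$ given $b$, I would reverse the logic as in the first example: \emph{prescribe} a convenient profile $U(y)$ (smooth, even, decaying, and with a mild singularity in the $L^2$ norm as $t\uparrow 1$) and then \emph{define} $\beta(y)$ so that the ODE is satisfied. Solving the resulting first-order relation for $\beta$ algebraically gives $\beta(y) = [\,\text{(diffusion and time terms)}\,]/U(y)$, so the only analytic work is to check the two required properties: first, that $b \in L^\infty((0,1);L^1(\R))$, which reduces to verifying $\int_\R |\beta(y)|\,dy < \infty$ since $\|b(t,\cdot)\|_{L^1(\R)} = \int_\R |\beta(y)|\,dy$ is independent of $t$ by the scaling; and second, that $\mu$ is chosen so that $\|u(t,\cdot)\|_{L^2(\R)} = (1-t)^{-\mu+1/4}\|U\|_{L^2(\R)} \to \infty$, i.e.\ $\mu > 1/4$, while the pointwise value $u(t,0) = (1-t)^{-\mu}U(0) \to \infty$ requires only $\mu > 0$ and $U(0) \neq 0$.

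\textbf{The main obstacle} is to arrange the profile $U$ and the exponent $\mu$ simultaneously so that $\beta = (\cdots)/U$ is globally integrable while the blow-up of the $L^2$ norm is genuine. The danger is that forcing $U$ to decay fast (to control $\beta$ at infinity and to keep $U \in L^2$) competes with the requirement that $\beta$ stay integrable where $U$ is small; in particular one must ensure $U$ has no zeros, so that $\beta$ is smooth, and that the numerator decays faster than $U$ at infinity so that $\beta \in L^1$. A Gaussian-type or rational profile should make these estimates transparent. I would also need to verify that the constructed $u$ genuinely lies in the stated regularity class $C^\infty([0,1)\times\R) \cap L^2((0,1);L^2(\R))$ and that $u_0 = U(x) \in L^2 \cap L^\infty \cap C^\infty$, which follows directly once $U$ is chosen smooth, bounded and square-integrable; the membership $u \in L^2((0,1);L^2(\R))$ amounts to checking $\int_0^1 (1-t)^{-2\mu+1/2}\,dt < \infty$, i.e.\ $\mu < 3/4$, so the admissible window is $1/4 < \mu < 3/4$, and any choice in this interval (for instance $\mu = 1/2$) completes the construction.
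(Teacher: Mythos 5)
Your overall strategy is exactly the paper's: pass to the similarity variable $y=x/\sqrt{1-t}$, reduce to an ODE for the profile, prescribe the profile $U$, and read off $\beta$ by dividing. The scaling computations you do carry out ($\|b(t,\cdot)\|_{L^1}=\|\beta\|_{L^1}$ for all $t$, $\|u(t,\cdot)\|_{L^2}=(1-t)^{-\mu+1/4}\|U\|_{L^2}$) are correct. But the step you defer to ``a Gaussian-type or rational profile should make these estimates transparent'' is precisely where all the work lies, and your proposed shortcuts fail. After integrating the profile ODE once (it is not purely algebraic: $\beta$ sits inside a divergence, so you must introduce a primitive $\tilde U$ of $U$), one gets
$$
\beta(y)=\frac{U'(y)-\bigl(\mu-\tfrac12\bigr)\tilde U(y)-\tfrac12\,y\,U(y)}{U(y)}\,.
$$
The term $-\tfrac12 yU/U=-y/2$ grows linearly no matter what $U$ is, so $\beta\in L^1(\R)$ can only hold if this term is \emph{cancelled} by $-(\mu-\tfrac12)\tilde U/U$. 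For a Gaussian $U=e^{-y^2/4}$ this is hopeless: $\tilde U$ tends to a constant, so $\tilde U/U\sim e^{y^2/4}$ blows up unless $\mu=1/2$, and for $\mu=1/2$ one is left with $\beta(y)=-y\notin L^1$. So the Gaussian option must be discarded, and with it your claim that any $\mu\in(1/4,3/4)$ (``for instance $\mu=1/2$'') works.

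What actually works, and what the paper does, is a polynomial profile $\tilde U(y)=y^{\gamma}+Cy^{\gamma-2}$ for large $y$, so that $\tilde U/U\sim y/\gamma$ is itself linear and the two $O(y)$ terms cancel \emph{provided} $\mu=(1-\gamma)/2$; thus $\mu$ is not a free parameter but is pinned to the decay rate of $U\sim\gamma y^{\gamma-1}$. Even after this cancellation the numerator leaves an $O(y^{-1})$ contribution to $\beta$, which is still not integrable at infinity; this forces a \emph{second} cancellation, achieved by tuning the coefficient $C$ of the subleading term, after which $\beta\sim y^{-3}\in L^1$. The constraints $U\in L^2$ and $\gamma\in(0,1)$ then give the admissible window $\mu\in(1/4,1/2)$ (the paper takes $\gamma=1/4$, $\mu=3/8$), not $(1/4,3/4)$. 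You correctly identified the tension between the decay of $U$ and the integrability of $\beta$, but resolving it requires these two forced cancellations, which are the substance of the proof and are absent from your proposal.
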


\begin{proof} For simplicity, we translate the time variable and we build a solution on~$(-1,0)$ which is bounded and smooth at $t=-1$ and with a blow-up at time $t=0$. 

\medskip

\noindent {\bf Step 1: Self-similar solutions.}
Due to the scaling properties of the equation, we look for a solution of \eqref{eqn:our} of the form
\begin{equation}
\label{eqn:self-similar}
u = \frac{1}{(-t)^\alpha} \tu\Big(\frac{x}{(-t)^{1/2}}\Big) 
\qquad \text{ and } \qquad
b = \frac{1}{(-t)^{1/2}} \tb\Big(\frac{x}{(-t)^{1/2}}\Big)\,,
\end{equation}
for some $\alpha>0$ to be chosen later.
Noticing that
$$
\begin{aligned}
\partial_t u 
& =
 \frac{\alpha}{(-t)^{\alpha+1}} \tu\Big(\frac{x}{(-t)^{1/2}}\Big)
+\frac{1}{2(-t)^{\alpha+1}} \cdot \frac{x}{(-t)^{1/2}} \tu'\Big(\frac{x}{(-t)^{1/2}}\Big)\,, \\
\partial_x(bu) 
&= 
\frac{1}{(-t)^{\alpha+1}} (\tb\tu)'\Big(\frac{x}{(-t)^{1/2}}\Big)\,, \\
\partial_{xx} u 
&= 
\frac{1}{(-t)^{\alpha+1}} \tu''\Big(\frac{x}{(-t)^{1/2}}\Big) \,,
\end{aligned}
$$
we conclude that the couple $(u,b)$ solves \eqref{e:PDE}, namely 
\begin{equation}
\label{eqn:our}
\partial_t u+ \partial_x(bu) = \partial_{xx} u\,,
\end{equation}
if and only if the couple $(\tu, \tb)$ solves for any $y\in \R$
\begin{equation}\label{e:tilde}
\alpha \tu(y) + \frac 1 2 y \tu'(y)+ (\tb\tu)'(y)= \tu''(y) \,.
\end{equation}
Let $\tuu$ be any primitive of $\tu$. Integrating once,~\eqref{e:tilde} is satisfied if and only if
$$
\Big(\alpha- \frac 1 2 \Big) \tuu(y) + \frac 1 2 y \tu(y)+ (\tb\tu)(y)= \tu'(y)\,.
$$

\medskip

\noindent {\bf Step 2: Construction of $\tu$ and $\tb$.}
We claim that we can find a smooth function~$\tuu$, such that $\tuu'= \tu$ is bounded, strictly positive on $\R$ and belongs to~$L^2(\R)$, in such a way that setting
\begin{equation}
\label{eqn:find-b}
\tb = \frac{\tu'- \Big(\alpha- \frac 1 2 \Big) \tuu(y) - \frac 1 2 y \tu}{\tu}
\end{equation}
it holds $\tb \in L^1(\R)$.

We first notice that finding such a function $\tuu$ on a given compact interval is not a problem. 
Actually, fixing any smooth positive $\tuu$ with  $\tuu'= \tu$ strictly positive and defining $\tb$ 
consequently would do the job. Therefore, we focus on finding the strictly increasing function 
$\tuu$ for $|y|>M$, for some~$M>1$ to be fixed later. Once this is done, we define $\tuu $ in 
$[-M, M]$ as a smooth, increasing extension of the previous function with $\tuu'= \tu$ strictly positive. 
Since we will look for an even function $\tu$, we restrict ourselves to finding $\tuu$ on $[M,\infty)$. 
We look for a solution of the form
$$
\tuu (y) = y^{\gamma} + Cy^{\gamma-2} \qquad \mbox{for } y >M
$$
for some $\gamma \in (0,1)$, $C \geq 0$, $M>1$ to be chosen later.

With this choice, for $y>M$ we have 
$$
\tu (y) = \gamma y^{\gamma-1} + C (\gamma -2) y^{\gamma-3}
$$
and
$$
\tb( y) = \frac{ \big( \gamma (\gamma-1) - C\alpha+\frac {3C} 2 - \frac C 2 \gamma \big)y^{-1} + C (\gamma-2) (\gamma-3) y^{-3}  + \big(-\alpha+\frac 1 2 -\frac \gamma 2 \big) y
}{\gamma+ C(\gamma-2) y^{-2}}\,.
$$

Next, we choose the coefficients in order for $\tb$ to have the best possible decay at infinity. In particular we require
$$ 
\gamma (\gamma-1) - C\alpha+\frac {3C} 2 - \frac C 2 \gamma= 0
\qquad \text{ and } \qquad
 -\alpha+\frac 1 2 -\frac \gamma 2= 0 \,,
$$
namely, we choose $\alpha$ and $C$ in terms of $\gamma$ as
$$
\alpha = \frac{1-\gamma}2
\qquad \text{ and } \qquad 
C = \gamma(1-\gamma)\,.
$$
Finally, we choose $\gamma= 1/4$ (remarking that this choice is quite arbitrary) and $M=2$, so that 
the final choice of the parameters is
$$
\gamma =\frac14\,, \qquad \alpha= \frac 38, \qquad \text{ and } \qquad C= \frac {3}{16} \,.
$$
With this choice for $y>2$ we have 
$$
\begin{aligned}
\tuu (y) 
&= 
y^{1/4} + \frac 3 {16}y^{-7/4}\,, \\
\tu (y) 
&= 
\frac 1 4 y^{-3/4} -\frac{21}{64} y^{-11/4}\,, \\
\tb( y) 
&= 
\frac{231 y^{-3} }{4(16-21y^{-2})}\,.
\end{aligned}
$$

Finally, we consider a smooth function $\tuu$ such that
$$
\tuu(y) = \left\{ \begin{array}{ll}
y^{1/4} + \frac 3 {16}y^{-7/4} & \text{for $y>2$} \\ \\ 
- (-y)^{1/4} - \frac 3 {16}(-y)^{-7/4} & \text{for $y<-2$,}
\end{array}\right.
$$
and is smooth, with strictly positive first derivative in $\R$. We notice that $\tu = \tuu' \in L^2(\R)$. Defining eventually $\tb$ as in \eqref{eqn:find-b}, we notice that it decays as~$y^{-3}$ at $\infty$, so it belongs to $L^1(\R)$.

\medskip

\noindent {\bf Step 3: Bounds on $b$ and finite-time blow-up of $u$.} 
From the properties of $\tu$ and $\tb$ in Step~2, it is clear that the functions $b$ and $u$ defined in \eqref{eqn:self-similar} belong to $C^\infty([0,1)\times \R)$. Moreover, the function $u$ blows up for $t=0$:
\begin{equation}\label{eqn:blow-up-why}
\limsup_{t\to 0^-} u(t,0) = \limsup_{t\to 0^-} \frac{1}{(-t)^\alpha} \tu(0) =\infty \,.
\end{equation} 

Next, we claim that $b \in L^\infty((-1,0); L^1(\R))$ and that 
\begin{equation}
\label{eqn:blow-up-autosimile}
\lim_{t\to 0^-} \|u(t,\cdot) \|_{L^2(\R)} = \infty\,.
\end{equation}
Indeed, for every $t\in [-1,0)$ by the change of variable formula we find
$$ 
\begin{aligned}
\int_{\R} |b(t,x)| \, dx 
&= 
\int_{\R} \Big| \frac{1}{(-t)^{1/2}} \tb\Big(\frac{x}{(-t)^{1/2}}\Big)\Big| \, dx= \int_{\R} |\tb(y)| \, dy \,, \\
\int_{\R} |u(t,x)|^2 \, dx 
&= 
\int_{\R} \Big| \frac{1}{(-t)^{\alpha}} \tu\Big(\frac{x}{(-t)^{1/2}}\Big)\Big|^2 \, dx=\frac{1}{(-t)^{2\alpha-1/2}}  \int_{\R} |\tu(y)|^2 \, dy \,.
\end{aligned}
$$
Since $\alpha<1/2$ and $\tu \in L^2(\R)$, we obtain~\eqref{eqn:blow-up-autosimile} and $u\in L^2((-1,0); L^2(\R))$.
\end{proof} 

\begin{rem}
We point out that the solution $u$ constructed in the proof of Theorem~\ref{thm:counterex-self-similar} 
belongs also to
$u \in L^1((0,1); L^\infty(\R))$, because
$$\|u(t,\cdot)\|_{L^\infty(\R)} = \|u(0,\cdot)\|_{L^\infty(\R)} \cdot \frac{1}{(1-t)^\alpha}$$
and $\alpha \in (0,1)$. This integrability allows for instance to give a meaning to the distributional formulation of \eqref{eqn:our} up to $t=1$ (included).
\end{rem}

\subsection{The case of higher dimension.}

We now extend the result of the previous subsection to higher dimension. We do this by considering solutions with spherical symmetry in the self-similarity variable. 

\begin{thm}\label{thm:counterex-self-similar-d}
Let $d > 1$. There exists a velocity field $b \in L^\infty((0,1); L^d(\R^d)) \cap C^\infty([0,1)\times \R^d)$  
and a smooth solution $u \in C^\infty([0,1)\times \R^d) \cap L^2((0,1); L^{\frac{d}{d-1}}(\R^d))$ 
of equation~\eqref{e:PDE} with $u_0 \in (L^2 \cap L^{\frac{d}{d-1}} \cap L^\infty \cap C^\infty)(\R^d)$ such that 
\begin{equation}\label{e:thesis2}
\begin{aligned}
\limsup_{t\to 1^-} u(t,0) = \infty\,,
\quad &
\lim_{t\to 1^-}  \|u(t,\cdot) \|_{L^{\frac{d}{d-1}}(\R^d)} = \infty \,, \\
&\quad \text{ and } \quad
\lim_{t\to 1^-} \|u(t,\cdot) \|_{L^2(\R^d)} = \infty \,. 
\end{aligned}
\end{equation} 
\end{thm}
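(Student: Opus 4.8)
The plan is to mimic the one-dimensional self-similar construction of Theorem~\ref{thm:counterex-self-similar}, adapting it to the spherically symmetric setting in $\R^d$. As in the proof above, I would translate time and seek a solution on $(-1,0)$ blowing up at $t=0$ of the self-similar form
\begin{equation*}
u = \frac{1}{(-t)^\alpha} \tu\Big(\frac{x}{(-t)^{1/2}}\Big)\,,
\qquad
b = \frac{1}{(-t)^{1/2}} \tb\Big(\frac{x}{(-t)^{1/2}}\Big)\,,
\end{equation*}
for suitable $\alpha>0$ and profiles $\tu,\tb$ depending only on $y=x/\sqrt{-t}\in\R^d$. Plugging this ansatz into~\eqref{e:PDE} and computing $\partial_t u$, $\div(bu)$ and $\Delta u$ exactly as in Step~1 above, the equation reduces to a profile equation for $(\tu,\tb)$ in $\R^d$, the analogue of~\eqref{e:tilde}. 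I would then look for radially symmetric profiles, so that $\tu(y)=U(|y|)$ and $\tb(y)=B(|y|)\,y/|y|$, which converts the PDE for the profile into an ODE in the radial variable $\rho=|y|$; the Laplacian contributes the usual $(d-1)/\rho$ term and the divergence acts on a radial vector field.

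The key steps, in order, are as follows. First, derive the radial profile ODE and, exactly as in Step~2, solve explicitly for a prescribed radial profile $U$ by defining $\tb$ algebraically in terms of $\tu$, its derivative, and a primitive, choosing $U$ to be smooth, bounded, strictly positive, and with the correct decay at infinity. The natural choice is $U(\rho)\sim \rho^{\gamma-1}$ for large $\rho$ (up to a correcting lower-order term as in the $d=1$ case), with $\gamma\in(0,1)$ and $\alpha$ fixed by requiring the leading-order coefficient in $\tb$ to vanish, so that $\tb$ decays as fast as possible. Second, verify the integrability requirements: by the change of variables $y=x/\sqrt{-t}$, the spatial $L^d$ norm of $b(t,\cdot)$ equals the $L^d(\R^d)$ norm of $\tb$ (the scaling exponents cancel precisely because $q=d$ is the scaling-critical exponent), which gives $b\in L^\infty((-1,0);L^d(\R^d))$ once $\tb\in L^d(\R^d)$. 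The same change of variables shows that $\int_{\R^d}|u(t,x)|^p\,dx = (-t)^{-(p\alpha - d/2)}\int_{\R^d}|\tu|^p\,dy$, so that $u(t,\cdot)$ blows up in $L^p$ as $t\to 0^-$ exactly when $p\alpha>d/2$, and belongs to $L^2((-1,0);L^p(\R^d))$ when the time exponent is integrable. I would choose $\gamma$ (hence $\alpha$) so that blow-up occurs simultaneously in $L^2$ and in $L^{d/(d-1)}$, while keeping $\tu$ in both $L^2(\R^d)$ and $L^{d/(d-1)}(\R^d)$.

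The main obstacle, and the point requiring genuine care, is the integrability of $\tb$ at infinity in $L^d(\R^d)$ together with the simultaneous membership of $\tu$ in the two Lebesgue spaces. In one dimension $\tb$ decayed like $y^{-3}$, comfortably in $L^1(\R)$; in $\R^d$ the radial integration introduces the weight $\rho^{d-1}\,d\rho$, so the decay rate of $B(\rho)$ must beat $\rho^{-d\cdot\text{(something)}}$ to secure $L^d$ integrability, and one must check that the free parameter $\gamma$ can be chosen to satisfy all constraints at once. Concretely, with $U(\rho)\sim\rho^{\gamma-1}$ one has $\tu\in L^{d/(d-1)}(\R^d)$ and $\tu\in L^2(\R^d)$ provided $\gamma$ lies in an appropriate subinterval of $(0,1)$, while the decay of $\tb$ (governed by the same $\gamma$) must place it in $L^d(\R^d)$; verifying that these ranges overlap, and that the resulting $\alpha=(1-\gamma)/2$ yields $p\alpha>d/2$ for both $p=2$ and $p=d/(d-1)$, is the computational heart of the argument. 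The remaining steps—smoothness of $b$ and $u$ on $[0,1)\times\R^d$, the pointwise blow-up $\limsup_{t\to1^-}u(t,0)=\infty$ from $(-t)^{-\alpha}\tu(0)\to\infty$, and the finiteness of the norms at the initial time—are routine, following verbatim the structure of Step~3 above.
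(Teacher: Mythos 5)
Your overall strategy coincides with the paper's: self-similar ansatz, reduction to a radial profile equation via a vector potential $\tuu$ with $\div\tuu=\tu$, algebraic definition of $\tb$ from a prescribed profile, cancellation of the linearly growing term in $\tb$ to force decay, and the scaling identity $\int|u(t,\cdot)|^p\,dx=(-t)^{-(p\alpha-d/2)}\int|\tu|^p\,dy$. The genuine gap is in the concrete choice of exponents, which you yourself identify as ``the computational heart of the argument'' but then fix incorrectly: you take the profile $\tu(y)\sim|y|^{\gamma-1}$ with $\gamma\in(0,1)$ and $\alpha=(1-\gamma)/2$, a verbatim copy of the one-dimensional choice. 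For $d\geq 2$ this fails on both counts. First, $|y|^{\gamma-1}$ with $\gamma\in(0,1)$ belongs to neither $L^2(\R^d)$ nor $L^{d/(d-1)}(\R^d)$: the radial weight $\rho^{d-1}$ forces the profile to decay strictly faster than $\rho^{-(d-1)}$, while yours decays slower than $\rho^{-1}$. Second, $\alpha=(1-\gamma)/2<1/2$ gives $p\alpha<d/2$ for both $p=2$ and $p=d/(d-1)$ when $d\geq2$, so neither norm blows up as $t\to0^-$; with your exponents the solution in fact decays. So no admissible $\gamma\in(0,1)$ exists, and the ranges you hope overlap are empty.

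The repair requires dimension-dependent exponents. Writing $\tu(y)\sim|y|^{-s}$ at infinity, the cancellation of the linear term in $\tb$ forces $\alpha=s/2$; membership of the profile in $L^2\cap L^{d/(d-1)}$ requires $s>d-1$; blow-up of the two norms requires $\alpha>(d-1)/2$ and $\alpha>d/4$, which again amounts to $s>d-1$; and time-integrability of $\|u(t,\cdot)\|_{L^{d/(d-1)}}$ in $L^2((-1,0))$ requires $\alpha<d/2$, i.e.\ $s<d$. The admissible window is therefore $s\in(d-1,d)$; the paper takes $\buu(r)=r^{-d+5/4}+Cr^{-d-3/4}$ at infinity, hence $s=d-1/4$ and $\alpha=d/2-1/8$, far from your $\alpha<1/2$. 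A secondary point you dismiss as routine but which needs an argument: near the origin the radial potential must be taken of the form $\buu(r)=Lr$ (so that $\tuu(y)=Ly$ is smooth and the resulting $\bu$ is positive and constant near $0$), and one must verify that the cutoff interpolation between this and the far-field expression preserves positivity of $\bu$; this is where the paper's constants $M=\sqrt{2d}+1$ and $L$ enter.
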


\begin{rem}
Observe that the conditions $b \in L^\infty((0,1); L^d(\R^d))$ and $u \in L^2((0,1); L^{\frac{d}{d-1}}(\R^d))$ allow to give a meaning to the distributional formulation of~\eqref{e:PDE}.
\end{rem}

\begin{proof}[Proof of Theorem~\ref{thm:counterex-self-similar-d}]
\noindent {\bf Step 1: Self-similar solutions with spherical symmetry.} 
We look again for a solution $(\tilde u, \tilde b)$ of~\eqref{e:PDE} of the form~\eqref{eqn:self-similar}. The couple $(u,b)$ solves \eqref{e:PDE} if and only if the couple $(\tu, \tb)$ solves for any $y\in \R^d$
$$
\alpha \tilde u(y) + \frac 1 2 y \cdot \nabla \tilde u(y)+ \div (\tilde b \tilde u)(y)= \Delta \tilde u(y) \,,
$$
that is 
\begin{equation}\label{e:previous}
\Big(\alpha- \frac d 2 \Big)\tilde u(y) + \frac 1 2 \div( y \tilde u(y))+ \div (\tilde b \tilde u)(y)= \Delta \tilde u(y)\,.
\end{equation}
Let us consider any function $\tilde U: \R^d \to \R^d$ such that
$$
\div \tilde U  = \tilde u \qquad \text{ in $\R^d$.}
$$
Then~\eqref{e:previous} is satisfied if
$$
\Big(\alpha- \frac d 2 \Big) \tuu(y) + \frac 1 2 y \tu(y)+ \tb(y)\tu(y)= \nabla \tu(y) \,.
$$
We look for solutions such that all the functions are spherically symmetric, namely such that they can be written as
$$
\tilde u (y) = \overline u(|y|)\,, \qquad 
\tilde b(y) = \overline b(|y|) \frac{y}{|y|}\,, \qquad 
\text{ and } \qquad
\tilde U (y) = \overline U(|y|)\frac{y}{|y|} \,,
$$
for $\overline u, \overline v , \overline U \in C^\infty([0,\infty))$.
Therefore, we look for a positive function $\overline u$, a function $\overline U$ solving
\begin{equation}
\label{eqn:bu}
\bu(r) = \overline U'(r)+ \frac{d-1}{r} \overline U(r) \qquad \mbox{in }[0,\infty)
\end{equation}
and a function $\overline b$ given by
\begin{equation}
\label{eqn:bb}
\bb(r) = \frac{\bu'(r)- \Big(\alpha- \frac d 2 \Big) \buu(r) - \frac 1 2 r\bu(r)}{\bu(r)}\,,
\end{equation}
with suitable decay properties at infinity and with a suitable behaviour at $0$ (in order for the corresponding $\tu$ and $\tb$ to be smooth at the origin).

\medskip

\noindent {\bf Step 2: Construction of $\bu$ and $\bb$.}
Given $M>0$ to be chosen later, we look for a solution of the form
$$
\buu (r) = r^{-\gamma} + Cr^{-\gamma-2} \qquad \mbox{for } r >M\,,
$$
for some $\gamma \in (0,1)$ and $C \geq 0$ to be chosen later.
With this choice, for $y>M$ we have
$$
\bu (y) =\overline U'(r)+ \frac{d-1}{r} \overline U(r) =  (-\gamma +d-1) r^{-\gamma-1} + C(-\gamma +d-3) r^{-\gamma-3} \,.
$$
The function $\bu$ is positive for $r$ large enough as soon as $\gamma<d-1$. Correspondingly, we find
$$
\begin{aligned}
& \bb( r) = \frac{1}{-\gamma+d-1+ C(-\gamma+d-3) r^{-2}}  \times \Bigg[ \left(-\alpha+\frac {1+\gamma} 2 \right) r \\
&\qquad\qquad\qquad\qquad\qquad + \left( (\gamma-d+1) (\gamma+1) - C\alpha+\frac {3C} 2 + \frac C 2 \gamma \right)r^{-1}  \\
&\qquad\qquad\qquad\qquad\qquad + C (\gamma-d+3) (\gamma+3) r^{-3} \Bigg] \,.
\end{aligned}
$$
We choose $\alpha$, $C$ in terms of $\gamma$ in order for the function $\bb$ to decay as $r^{-3}$: namely, we require
$$ 
(\gamma-d+1) (\gamma+1) - C\alpha+\frac {3C} 2 + \frac C 2 \gamma= 0
\qquad \text{ and } \qquad
-\alpha+\frac {1+\gamma} 2= 0\,,
$$
that is,
$$
\alpha = \frac{1+\gamma}2
\qquad \text{ and } \qquad
\qquad C = {(-\gamma+d-1)(1+\gamma)}\,.
$$
Finally, we choose $\gamma= d-5/4$, namely
$$
\gamma =d-\frac{5}{4}\,, 
\qquad \alpha= \frac{d}{2} -\frac{1}{8}\,, 
\qquad \text{ and } \qquad
C= \frac{1}{4} \left(d-\frac{1}{4} \right)\,.
$$ 
We also choose
$$
M=\sqrt{2d}+1\,,
$$
so that the function 
$$ 
\frac{1}{4} r^{-d+1/4} - \frac {7}{16} \left( d-\frac{1}{4}\right)r^{-d-7/4}
$$
(which coincides with $\bu(r)$ for $r>M$) is strictly positive in $[M-1, \infty)$.

With this choice we have for $r>M$
\begin{equation}\label{e:outside}
\begin{aligned}
\buu (r) &= r^{-d+5/4} + \frac{1}{4} \left( d-\frac{1}{4} \right)r^{-d-3/4}\,, \\
\bu (r) & = \frac{1}{4} r^{-d+1/4} - \frac{7}{16} \left(d-\frac{1}{4} \right)r^{-d-7/4}\,, \\
\bb(r) & = \frac{7(4d-1)(4d+7) r^{-3}}{4 \big( 16-7(4d-1)r^{-2} \big)} \,.
\end{aligned}
\end{equation}

Finally, we fix a constant $L$ depending on the dimension $d$ only such that
\begin{equation}
\label{eqn:L-condition}
Lr \leq  r^{-d+5/4} + \frac {1}{4} \left( d-\frac{1}{4} \right)r^{-d-3/4} \qquad \text{ for $r\in [M-1,M]$}
\end{equation}
(notice that the right-hand side coincides with the expression of $\buu$ in \eqref{e:outside}).
We consider a smooth function $\psi(r)$ which is a cutoff between $M-1$ and $M$, namely it coincides with $1$ in  $[0,{M-1}]$ and with $0$ in $[M, \infty)$. We define 
$$
\buu(r) = \psi(r) Lr + (1-\psi(r))\left[r^{-d+5/4} + \frac{1}{4} \left( d - \frac{1}{4} \right)r^{-d-3/4}\right] 
\qquad
\text{ for $r\geq 0$.}
$$
In this way, $\buu$ coincides with $Lr$ in a neighbourhood of $0$, and consequently $\tuu(x) = \buu(|x|)\frac{x}{|x|}$ is smooth. Moreover the associated $\bu$, defined as in \eqref{eqn:bu}, is positive in $[0,\infty)$ (and locally constant in a neighbourhood of the origin) because of \eqref{eqn:L-condition} and of the positivity of 
$$
\div(Lr) 
\qquad \text{ and } \qquad
\div \left[r^{-d+5/4} + \frac{1}{4} \left( d - \frac{1}{4} \right)r^{-d-3/4}\right] \,.
$$
Indeed, $\bu$ is given by
\begin{equation*}
\begin{aligned}
\bu(r) &= 
\overline U'(r)+ \frac{d-1}{r} \overline U(r) \\
& =
\psi(r) Ld+ (1-\psi(r))\left[\frac{1}{4} r^{-d+1/4} - \frac{7}{16} \left( d-\frac{1}{4}\right)r^{-d-7/4}\right] \\
& \qquad -\psi'(r) \left[r^{-d+5/4} + \frac{1}{4} \left( d-\frac{1}{4} \right)r^{-d-3/4} - Lr \right]
\end{aligned}
\end{equation*}
and all three terms in the right-hand side are positive and the function $\psi$ is decreasing. Correspondingly we also define $\bar b$ according to \eqref{eqn:bb} close to the origin and notice that it is smooth.

Finally, as regards the integrability of $\bu$ and $\bb$, thanks to their decay properties at infinity \eqref{e:outside}, we remark that
\begin{equation}
\label{eqn:integr-tb-tu}
\int_0^\infty \left( \bu^{\frac{d}{d-1}} + \bu^2\right) r^{d-1} \, dr <\infty 
\qquad \text{ and } \qquad
\int_0^\infty |\bb|^d r^{d-1}\, dr <\infty \,.
\end{equation}

\medskip
\noindent {\bf Step 3: Bounds on $b$ and finite-time blow-up of $u$.} 
From the properties of $\bu$ and $\bb$ in Step~2 the functions $\tu$ and $\tb$ are smooth as well, while the functions $b$ and $u$ defined in~\eqref{eqn:self-similar} are in $ C^\infty([-1,0)\times \R^d)$ (in particular, also $b$ is smooth at the origin since $\bb(r)$ has a linear behaviour with respect to $r$ around $0$). Moreover, the function $u$ blows up for $t=0$ as it happened in~\eqref{eqn:blow-up-why}.

Regarding the integrability of $b$, we have that $b \in L^\infty((-1,0); L^d(\R^d))$ because for every $t\in [-1,0)$ by the change of variable formula
$$ 
\begin{aligned}
\int_{\R^d} |b(t,x)|^d \, dx &= \int_{\R^d} \frac{1}{(-t)^{d/2}}  \left| \tb \left(\frac{x}{(-t)^{1/2}} \right)\right|^d \, dx \\
& = \int_{\R^d} |\tb(y)|^d \, dy  = C_d \int_0^\infty |\bb(r)|^d r^{d-1}\, dr \,,
\end{aligned}
$$
which is finite thanks to the second property in \eqref{eqn:integr-tb-tu}.
Finally, we observe that $u_0 \in (L^2 \cap L^{\frac{d}{d-1}})(\R^d)$ by~\eqref{eqn:integr-tb-tu} and we notice that
$$
\begin{aligned}
\int_{\R^d} |u(t,x)|^{\frac{d}{d-1}} \, dx 
&= \int_{\R^d} \left| \frac{1}{(-t)^{\alpha}} \tu\left(\frac{x}{(-t)^{1/2}}\right)\right|^{\frac{d}{d-1}} \, dx \\
&= \frac{1}{(-t)^{\alpha\frac{d}{d-1}-d/2}}  \int_{\R} |\tu(y)|^{\frac{d}{d-1}} \, dy \\
& =\frac{C_d}{(-t)^{\alpha\frac{d}{d-1}-d/2}}  \int_0^\infty |\bu(r)|^{\frac{d}{d-1}} r^{d-1} \, dr \,.
\end{aligned}
$$
Since $\alpha>(d-1)/2$ the previous quantity blows up, giving the second statement in~\eqref{e:thesis2}, while we have $u\in L^2((-1,0); L^{\frac{d}{d-1}}(\R))$ thanks to the computation
$$
\int_{-1}^0 \left(\int_{\R^d} |u(t,x)|^{\frac{d}{d-1}} \, dx \right)^{ \frac{2(d-1)}d}\, dt 
= 
\int_{-1}^0 \frac{C_d}{(-t)^{2\alpha-d+1}}\,dt \left( \int_0^\infty |\bu(r)|^{\frac{d}{d-1}} r^{d-1} \, dr \right)^{ \frac{2(d-1)}d} \,.
$$
The same type of computation allows to prove the last statement in~\eqref{e:thesis2}. 
\end{proof}

% ------------------------------------------------------------------------

\subsection*{Acknowledgment}
The authors thank Paolo Baroni, Ugo Gianazza, Renato Luc\`a, and Connor Mooney for several interesting discussions. GC was partially supported by the Swiss National Science Foundation (Grant 156112) and by the European Research Council (ERC Starting Grant 676675 FLIRT). LVS is a member of the GNAMPA group of INdAM (``Istituto Nazionale di Alta Matematica").

% ------------------------------------------------------------------------
\end{document}